\newtheorem{thm}{Theorem}
\theoremstyle{definition}
\newtheorem{lem}{Lemma}
\newtheorem{dfn}{Definition}
\newtheorem{prp}{Proposition}
\newtheorem{cnj}{Conjecture}
\newcommand{\im}{\text{im}}
\newcommand{\Z}{\mathbb{Z}}
\newcommand{\R}{\mathbb{R}}
\newcommand{\C}{\mathbb{C}}
\author{Cole Hugelmeyer} 
\title{Inscribed squares and relation avoiding paths.}
\begin{document}

\maketitle 

\begin{abstract}We develop a connection between the inscribed square problem and the question of understanding relation avoiding paths in a complex vector space. Our main theorem is that a Jordan curve with no inscribed squares would have a seemingly impossible structure which we call a square envelope. We will make some conjectures about the nature of relation avoiding paths in vector spaces, and show that these conjectures would imply the existence of inscribed squares in Jordan curves with finitely many arbitrarily complicated singularities.\end{abstract}

\section{Introduction}

A Jordan curve is a continuous injective function $\gamma: S^1\to \R^2$ which wraps counterclockwise around the region it encloses. An inscribed square of $\gamma$ is a quadruple of distinct points on $\im(\gamma)$ which form a square in $\R^2$.

\begin{figure}[H]
\centering
\includegraphics[scale = 0.7]{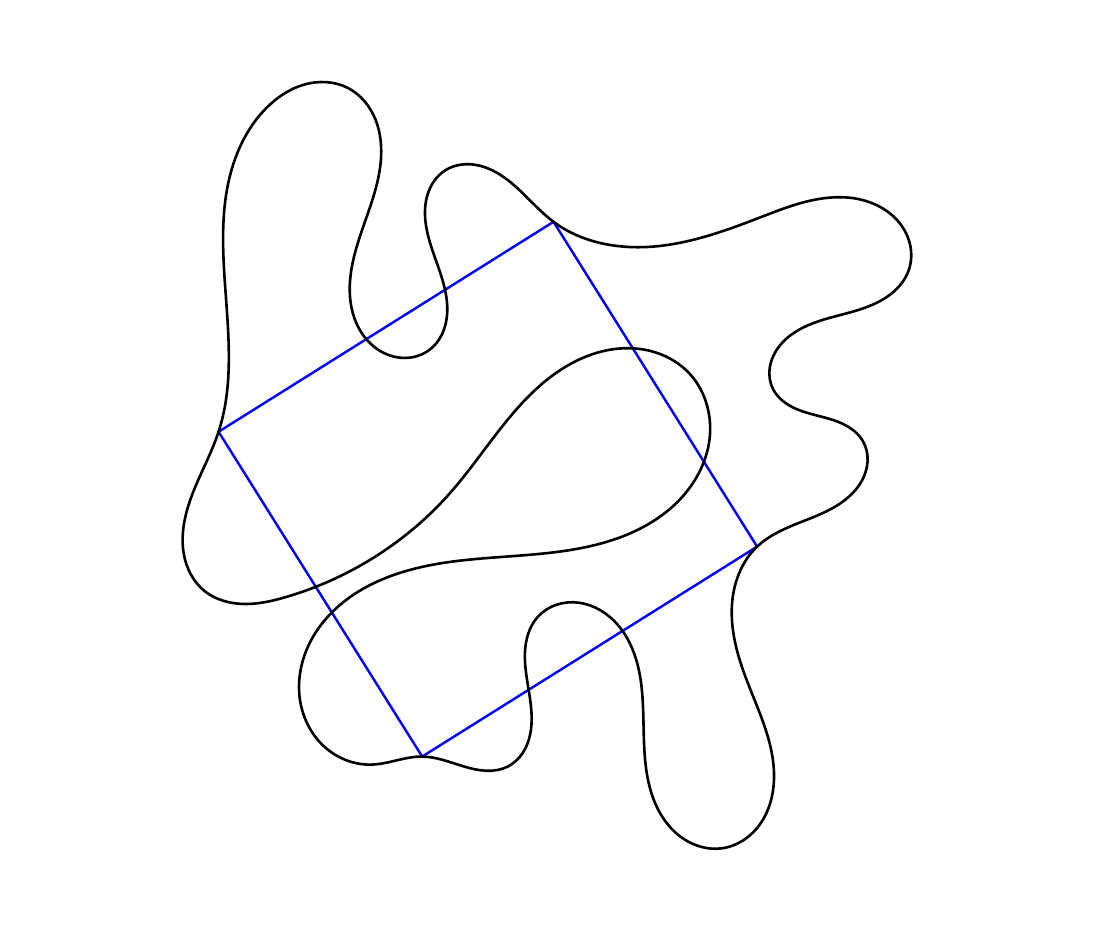}
\caption{An inscribed square of a Jordan curve.}
\end{figure}

\begin{cnj}[Toeplitz 1911]
Every Jordan curve has an inscribed square.
\end{cnj}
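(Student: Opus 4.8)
The plan is to argue by contradiction, using the structural dichotomy announced above. Suppose $\gamma\colon S^1\to\R^2$ is a Jordan curve with no inscribed square. By the main theorem of this paper, $\gamma$ then carries a \emph{square envelope}, so it suffices to show that no Jordan curve admits a square envelope. To see why a square envelope is the \emph{only} obstruction, I would first run the classical limiting heuristic as a warm-up: approximate $\gamma$ uniformly by smooth Jordan curves $\gamma_k$; by the known smooth case (Emch, Schnirelmann, Stromquist) each $\gamma_k$ inscribes a square $Q_k$; the set of quadruples of points of $\im(\gamma_k)$ forming a closed, possibly degenerate, square is compact, so after passing to a subsequence $Q_k\to Q_\infty$ with all four vertices on $\im(\gamma)$. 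If $Q_\infty$ is a genuine square we are done. The entire content of the theorem is therefore the degenerate case, where $Q_\infty$ collapses to a single point; a square envelope is exactly the bookkeeping object recording a shrinking family of near-squares that concentrates at a point while never producing an honest square. So the theorem reduces to: \emph{a shrinking family of near-squares can never be organized into a square envelope.}

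Second, I would push this reduction into the complex picture, where the relation-avoiding viewpoint of this paper is the natural language. Identify $\R^2$ with $\C$. A labelled square is a quadruple $(a,b,c,d)\in\C^4$ cut out by two fixed $\C$-linear relations (the parallelogram relation together with a ninety-degree rotation relation), so ``inscribed square'' is the statement that the configuration map $(S^1)^4\to\C^4$ meets this linear locus away from the diagonal. A square envelope then translates into a path in an associated complex vector space that hugs the relation locus arbitrarily closely without ever meeting it. Ruling such a path out should come down to a degree / winding-number computation, in the spirit of the symplectic and knot-theoretic arguments already known for smooth curves, but carried out here with purely topological input — no tangent lines, no transversality, no smooth normal forms — so that the argument is insensitive to the regularity of $\gamma$.

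The hard part, and the part I expect to be genuinely open, is precisely this last step at full generality. When $\gamma$ has only finitely many singularities, the conjectures about relation-avoiding paths stated later in the paper are designed exactly to forbid the relevant square envelope, and the argument closes. But an arbitrary Jordan curve may be nowhere differentiable, even of positive Lebesgue area, so near the collapse point of a putative square envelope there need be no tangent line, no rectifiability, and no local linear model at all — the envelope could a priori be supported on a Cantor-type set on which none of the usual analytic tools apply. To finish one would need either (i) a refined approximation scheme producing inscribed squares of size bounded uniformly below, thereby excluding every square envelope outright, or (ii) a proof of the relation-avoiding-path conjectures with no regularity hypothesis whatsoever. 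Everything upstream — the contradiction setup, the smooth approximation, the passage to $\C$, and the reduction to nonexistence of square envelopes — is either supplied by the results above or is routine; so the nonexistence of square envelopes for \emph{arbitrary} Jordan curves is the single obstacle on which this proposal stands or falls.
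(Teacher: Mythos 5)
There is a genuine gap, and it is unavoidable: the statement you are asked to prove is the Toeplitz conjecture itself, which this paper does not prove and which remains open. The paper only proves (i) that a bad Jordan curve must carry a square envelope (Theorem 1), (ii) the one-dimensional case of the Spiral Conjecture (Theorem 2), and (iii) that the Spiral Conjecture would imply inscribed squares for curves smooth away from finitely many points (Theorem 3). Your proposal correctly performs the contrapositive reduction ``no square envelopes $\Rightarrow$ no bad Jordan curves,'' but then the entire burden of the proof sits on the step you yourself flag as open: showing that no Jordan curve admits a square envelope, or equivalently proving a regularity-free version of the relation-avoiding-path conjectures. Neither is available; even the Spiral Conjecture is only verified here when $V$ is one dimensional, and the paper explicitly conjectures, rather than proves, everything beyond that. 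So what you have written is a (reasonable) research program, not a proof of the statement.

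Two of your supporting claims also need correction. First, the square envelope is not merely ``bookkeeping for a shrinking family of near-squares concentrating at a point'': condition (4) of the definition is a global winding condition forcing the two exterior corners to sweep once around the whole curve while the two interior corners stay inside, and it is exactly this global structure (not local concentration) that the paper extracts from the parity $b_\gamma$ being odd. Second, your claim that the passage to the complex relation-avoiding picture is ``insensitive to the regularity of $\gamma$'' is not borne out by the paper: the construction of a relation avoiding origin path from a square envelope uses Lemma 2, whose proof (existence and coincidence of the limits $\lim_{t\to *\infty}e_1(t)$, $\lim_{t\to *\infty}e_2(t)$ and the sign relations for $\sigma_p^*$) relies on $\gamma$ being smooth except at finitely many points. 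For an arbitrary, possibly nowhere-differentiable Jordan curve even this ``upstream'' reduction is not routine, so the obstacle is not confined to the final step as your last paragraph suggests.
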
 

Towards this end, we define a \emph{bad Jordan curve} to be a Jordan curve with no inscribed squares. Our main result is that a bad Jordan curve must have a structure which we call a \emph{square envelope}. A square envelope is, loosely speaking, a square moving with time, so that its first two corners are always outside the Jordan curve, its second two corners are always inside the Jordan curve, and the outside corners wrap completely around the Jordan curve as time progresses. 

To rigorously define a square envelope, we let $T:\R^2\to \R^2$ be the linear map corresponding to 90 degree counterclockwise rotation. For a pair of points $a$ and $b$ in the plane, we define $S_1(a,b) = a + T(b-a)$ and $S_2(a,b) = b + T(b-a)$. If $a\neq b$, then the four points $a,b,S_2(a,b),S_1(a,b)$ form the vertices of a square, labeled counterclockwise.

\begin{dfn}
Let $\gamma: S^1\to \R^2$ be a Jordan curve. A \emph{square envelope} of $\gamma$ is a pair of continuous functions mapping from $\R$ to the plane, $e_1, e_2: \R\to \R^2$, such that all of the following are true.

\begin{itemize}
\item[1)] For all $x\in \R$, $e_1(x)$ and $e_2(x)$ are in the open exterior of $\gamma$.
\item[2)] For all $x\in \R$, $S_1(e_1(x),e_2(x))$ and $S_2(e_1(x), e_2(x))$ are in the open interior of $\gamma$.
\item[3)] $\lim_{x\to \infty} \|e_2(x) - e_1(x)\| = \lim_{x\to -\infty} \|e_2(x) - e_1(x)\| = 0$.
\item[4)] Let $\ell_\lambda$ be the closed curve obtained by first following $e_1$ from $e_1(-\lambda)$ to $e_1(\lambda)$, then a straight line from $e_1(\lambda)$ to $e_2(\lambda)$, then $e_2$ from $e_2(\lambda)$ to $e_2(-\lambda)$, and finally a straight line from $e_2(-\lambda)$ back to $e_1(-\lambda)$.  If $p$ is any point inside $\gamma$, then there exists a real number $\lambda_0 > 0$ such for all $\lambda > \lambda_0$, we have that the winding number of $\ell_\lambda$ around $p$ is equal to one. 
\end{itemize}
\end{dfn}

\begin{figure}[H]
\centering
\includegraphics[scale = 0.5]{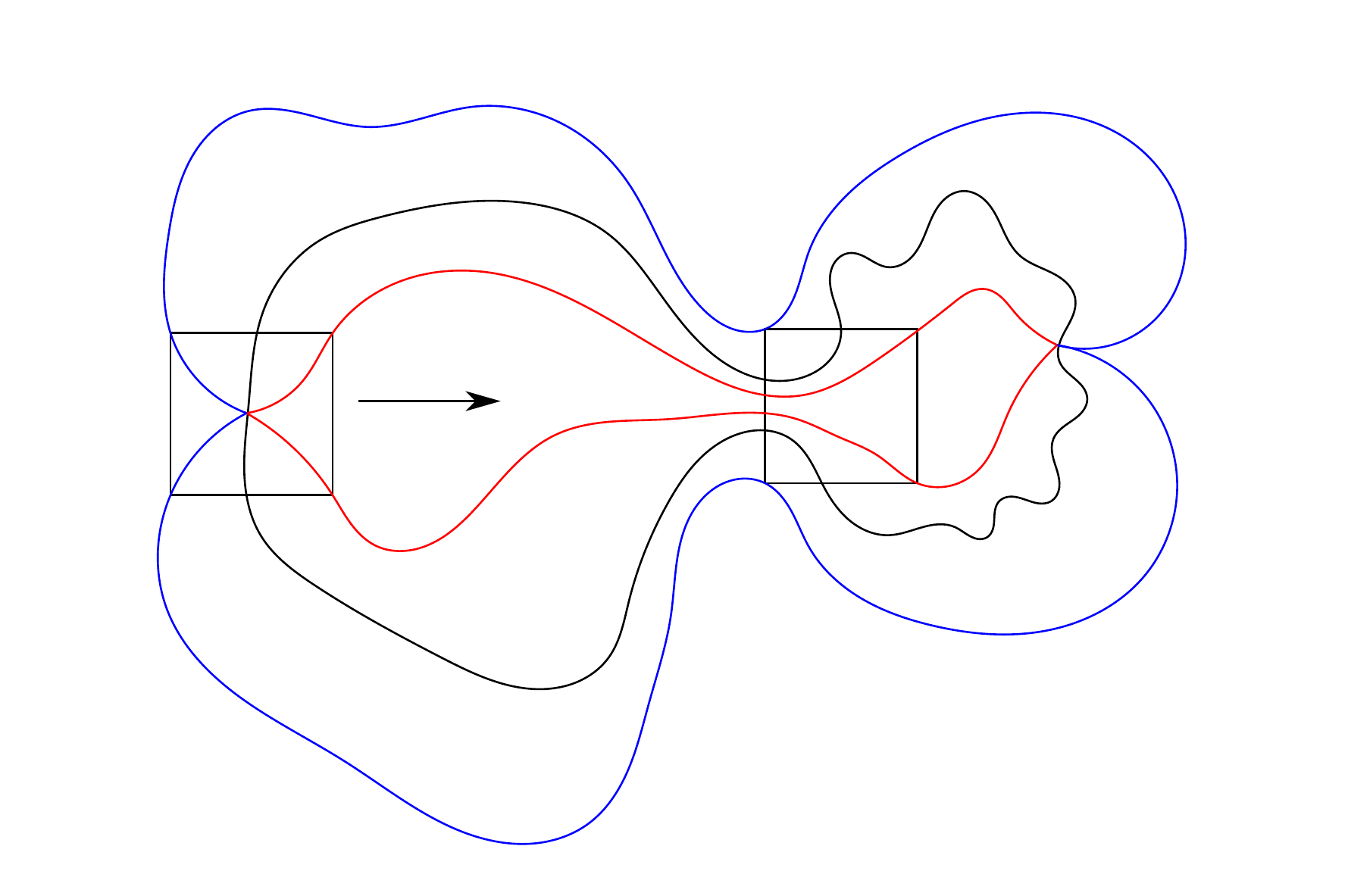}
\caption{As square envelopes conjecturally do not exist, they are somewhat difficult to draw. This is a rough approximation of what one might look like. The square must somehow have its first two corners move around the outside of the Jordan curve while the other two corners remain inside.}
\end{figure}

\begin{thm}
Every bad Jordan curve has a square envelope.
\end{thm}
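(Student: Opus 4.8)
The plan is to detect inscribed squares as the zero set of an explicit map on a configuration space and to read off the envelope from the structure a bad curve is forced into. Identify $\R^2$ with $\C$, so that $T$ is multiplication by $i$ and $S_1(a,b)=a+i(b-a)$, $S_2(a,b)=b+i(b-a)$. Since $\gamma$ is Jordan, the signed distance $\mathrm{sd}\colon\R^2\to\R$ to $\im(\gamma)$ (positive on the open interior, negative on the open exterior, zero on $\im(\gamma)$) is a continuous function. Let $\mathcal A$ be the space of ordered pairs of distinct points of $\im(\gamma)$; pulling back along $\gamma$ presents $\mathcal A$ as the $2$-torus minus its diagonal circle, an open annulus with two ends. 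Define $F\colon\mathcal A\to\R^2$ by $F(a,b)=\bigl(\mathrm{sd}(S_1(a,b)),\,\mathrm{sd}(S_2(a,b))\bigr)$. A zero of $F$ is exactly a pair with all four corners $a,b,S_2(a,b),S_1(a,b)$ on $\im(\gamma)$, and since $a\neq b$ these four points are automatically distinct, so a zero of $F$ is an inscribed square. Hence if $\gamma$ is bad then $F$ never vanishes, and the closed sets $A=F^{-1}(\R\times\{0\})$ and $B=F^{-1}(\{0\}\times\R)$ --- where $S_2$, resp.\ $S_1$, lies on $\im(\gamma)$ --- are disjoint.

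Next I would study $F$ near the two ends of $\mathcal A$, where $b\to a$ along the curve. For a smooth counterclockwise curve the interior lies to the left, so $i\gamma'$ points inward; thus when $b$ is slightly ahead of $a$ both completing corners are slightly interior and $F$ points into the open first quadrant, and when $b$ is slightly behind $a$ both are slightly exterior and $F$ points into the open third quadrant. In that model case $A$ and $B$ each separate the two ends of the annulus, so, being disjoint, they bound a region $P\subseteq\mathcal A$ on which both completing corners lie in the closed interior of $\gamma$, with exactly one completing corner on $\im(\gamma)$ along the frontier of $P$. Over the part of $P$ where both corners are strictly interior one may displace $(a,b)$ a small amount outward into the exterior of $\gamma$, keeping $S_1,S_2$ interior (an open condition), to obtain genuine envelope configurations $(e_1,e_2)$; the end of the annulus supplies such configurations collapsing to a single point of $\im(\gamma)$, giving conditions (1)--(3) together with the $x\to\pm\infty$ behaviour. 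Condition (4), that the strip swept by the segments $[e_1(x),e_2(x)]$ covers the interior with winding number exactly one, then comes from a degree computation comparing the loop $\ell_\infty$ --- which lies in $\overline{\mathrm{ext}(\gamma)}$, whose first homology is infinite cyclic and generated by the class of $\gamma$ --- against $\gamma$ itself.

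The main obstacle, I expect, is twofold. First, a general continuous Jordan curve has no tangent line, so the clean first/third-quadrant behaviour at the ends can fail and $A,B$ need not separate anything; handling this seems to require either running all the local analysis through approximations by Jordan curves regular enough to be non-bad by classical results (e.g.\ real-analytic ones), or a genuinely more robust argument --- for instance, building $e_1,e_2$ from the way the inscribed squares of such approximating non-bad curves must collapse to points of $\im(\gamma)$ as those curves tend to $\gamma$. Second, and in either approach, there is the winding-number bookkeeping: converting the soft fact that $F$ is nonvanishing on an annulus with prescribed end behaviour into the hard output of a \emph{single} continuous path of envelope configurations that actually closes up with winding number precisely one, ruling out that the natural candidate path degenerates or fails to close, and keeping track of every orientation along the way. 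By contrast, the identification of $\mathcal A$ as an annulus, the continuity of $\mathrm{sd}$ and hence of $F$, and the equivalence between zeros of $F$ and inscribed squares should all be routine.
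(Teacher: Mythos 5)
Your setup coincides with the paper's: your $F$ is exactly the paper's $g_{f,\gamma}$ restricted to the open annulus $C$ of ordered pairs of distinct parameters, and the observation that badness makes it nonvanishing is the same easy step. The genuine gap is the inference that because the disjoint closed sets $A$ and $B$ each separate the two ends, "they bound a region $P$ on which both completing corners lie in the closed interior." That implication is false at this level of generality. Model example: on $S^1\times\R$ take $F(\theta,t)=(\cos\phi(t),\sin\phi(t))$ with $\phi$ decreasing monotonically from $5\pi/4$ to $\pi/4$. Then $F$ is nonvanishing, lies in the open third quadrant near one end and the open first quadrant near the other, $A$ and $B$ are disjoint core circles each separating the ends, yet the region between them maps entirely into the second quadrant (one completing corner inside, one outside), and no path joining the two ends keeps both completing corners in the closed interior. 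What excludes this behaviour for an actual bad curve is precisely that the induced map $C\to\R^2\setminus\{(0,0)\}\simeq S^1$ has nonzero degree on the core of the annulus; only then does the preimage of a fixed "both corners interior" direction contain a properly embedded line running from one end of $C$ to the other, which is what becomes $(e_1,e_2)$ after pushing off the curve, and which is also what delivers the winding-number-one condition (4). You list this "winding-number bookkeeping" as an anticipated obstacle, but it is not bookkeeping: it is the entire content of the theorem, and your proposal supplies no mechanism for it.

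The paper's mechanism is the parity computation you are missing. It defines $b_\gamma$ as the degree just described and proves it is odd (Lemma 1) by approximating $\gamma$ in $C^0$ by a generic smooth curve $\gamma'$, classifying inscribed squares of $\gamma'$ into types I, II, III by the cyclic order of their vertices along the curve, using transversality and generic homotopies (computed on an ellipse) to show type I occurs an odd number of times and types II and III an even number, and then showing that for a sufficiently close approximation each type I, II, III square contributes exactly $3$, $2$, $1$ zeros of $g_{f,\gamma'}$ to the region between the diagonal and the anti-diagonal; the total is odd. Note also that your first/third-quadrant analysis near the ends uses a tangent line, while a bad Jordan curve is merely continuous (smooth curves are never bad), so even that local step must be routed through approximations; the paper does this with the mod $2$ class $\omega_{\gamma'}$, where smoothness of $\gamma'$ is what makes the pushed-off diagonal evaluate trivially. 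Your alternative suggestion of tracking how inscribed squares of non-bad approximants collapse to points of $\im(\gamma)$ is not developed and does not obviously yield a single continuous family of configurations, let alone one satisfying condition (4).
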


This theorem allows us to make a connection between the inscribed square problem and the problem of understanding relation avoiding paths in a vector space. The set of squares is the vector space, and the linear relation we avoid is the first two corners of one square touching the second two corners of another.

Let $V$ be a vector space over $\C$, and suppose $R_1,R_2,...,R_n$ are linear relations, subspaces of the vector space $V\times V$. We then let $R$ be the relation given by the union of sets $\cup_{i = 1}^n R_i$. We will assume that $R$ is symmetric, namely $aRb\iff bRa$. 

A \emph{relation-avoiding path} is a continuous function $p:[0,1]\to V$ such that there does not exist any pair of times $t_1,t_2$ in $[0,1]$ with $p(t_1)Rp(t_2)$. We define a \emph{relation-avoiding origin path} to be a continuous function $p: [0,\infty)\to V$ with $\lim_{t\to \infty}p(t) = 0$, such that there does not exist any pair of times $t_1,t_2$ in $[0,\infty)$, for which $p(t_1)Rp(t_2)$. The origin is not included as the endpoint of this path because we always have $0R0$. 

\begin{dfn}
We say two relation avoiding origin paths $p$ and $q$ are \emph{weakly homotopic} if there exists a continuous function $h: [0,1]\times [0,\infty) \to V$ such that 
\begin{itemize}
\item[1)]  $\lim_{t\to \infty}h(s,t) = 0$ for all $s$.
\item[2)]  $h(0,t) = p(t)$ for all $t$.
\item[3)]  $h(1,t) = q(t)$ for all $t$. 
\item[4)]  There does not exist any $(s,t)\in  [0,1]\times [0,\infty)$ with $h(s,t)\,R\,h(s,0)$.
\end{itemize} 
We say $p$ and $q$ are \emph{strongly homotopic} if $h$ can be chosen so that $h(s,-)$ is a relation avoiding origin path for all $s$.
\end{dfn}

We propose the following conjecture about relation avoiding origin paths. 

\begin{cnj}[SC, Spiral Conjecture]
Every relation avoiding origin path $p$ is weakly homotopic to a relation avoiding origin path $q$ for which there exist linearly independent vectors $x_1,...,x_n$ in $V$ and complex numbers $a_1,...,a_n$ such that $$q(t) = \sum_{i = 1}^n x_i\cdot e^{a_it}$$ for all $t$.
\end{cnj}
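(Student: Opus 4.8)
The plan is to analyze $p$ in the regime $t\to\infty$, where it accumulates at the origin, exploiting that each $R_i$, being a linear subspace of $V\times V$, is invariant under the diagonal scaling $(v,w)\mapsto(\lambda v,\lambda w)$; this lets us ``renormalize'' the tail of $p$ and recognize a limiting self-similar tail as an orbit of a linear flow. The organizing remark is that $q(t)=\sum_i x_ie^{a_it}$ with the $x_i$ linearly independent is exactly the orbit through $\sum_i x_i$ of the flow $e^{tA}$ with $Ax_i=a_ix_i$, and that $q(t)\to0$ forces $\mathrm{Re}(a_i)<0$ for all $i$; so the target is to weakly homotope $p$ onto an orbit of a diagonalizable linear flow whose spectrum, on the invariant subspace it generates, lies in the open left half-plane.

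First I would dispose of the compact part. On each $[0,T]$ the image of $p$ lies a uniform positive distance from the closed set $R$ (note $p$ is nowhere zero, since $(0,0)\in R$), so replacing $p|_{[0,T]}$ by a fine enough piecewise-linear approximant is a small, hence weak, homotopy, and this makes the compact part finite-dimensional. A preliminary ``taming'' homotopy --- small relative to $\|p(t)\|$ at each scale, which is enough to preserve relation-avoidance locally --- arranges that on the tail the maps $t\mapsto\log\|p(t)\|$ and $t\mapsto p(t)/\|p(t)\|$ are uniformly Lipschitz. Note that a piecewise-linear approximation of the \emph{tail} will not do: a ray into the origin is generally not relation-avoiding (if some $R_i$ is the graph $\{w=2v\}$ it forces $p(t_2)\ne2p(t_1)$), so the tail must genuinely spiral, and one must identify which spiral.

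Next comes the renormalization. Choosing $c_n\to\infty$ and setting $p_n(t):=\|p(c_n)\|^{-1}p(t+c_n)$ normalized, each $p_n$ is relation-avoiding (diagonal scale-invariance) and, by the taming step, equicontinuous; after a final reduction of the target to a fixed finite-dimensional subspace, a subsequence converges locally uniformly to $p_\infty:\R\to V$. I would then argue --- extracting along an arithmetic progression of scales and upgrading the limit --- that $p_\infty$ can be arranged to be self-similar, $p_\infty(t+\tau)=\Lambda p_\infty(t)$ for a linear contraction $\Lambda$ with spectrum off the negative real axis. Granting this, $p_\infty(t)=e^{(t/\tau)\log\Lambda}p_\infty(0)$ is a linear-flow orbit; expanding in generalized eigencoordinates for $\log\Lambda$ writes it as $\sum_i(\text{polynomial in }t)e^{a_it}$ with $\mathrm{Re}(a_i)<0$, and a generic arbitrarily small perturbation of $\log\Lambda$ diagonalizes it and collapses the polynomial factors, giving the required form $q(t)=\sum_i x_ie^{a_it}$ with the $x_i$ linearly independent. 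It then remains to build a weak homotopy from $p$ to $q$ that interpolates through the renormalization, and to homotope the compact part accordingly rel endpoints --- a general-position matter once $V$ is finite-dimensional and the $R_i$ are proper, hence of real codimension at least $2$.

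The main obstacle is that diagonal scale-invariance is all we have: the $R_i$ are not invariant under independent rescalings of the two slots, so relation-avoidance does not transfer between different scales of $p$, and a renormalized limit $p_\infty$ is a priori only guaranteed to avoid $R$ up to its boundary --- it can self-relate, or touch $R$ along the subspaces $\{v:(v,0)\in R_i\}$ or $\{w:(0,w)\in R_i\}$ that govern its leading behavior. One must therefore perturb $p_\infty$ off $R$ while keeping it a linear-flow orbit, and I expect this to force peeling the leading (slowest-decaying) block $\ell(t)=\sum_{\mathrm{Re}(a_i)=\mu}x_ie^{a_it}$ off first and recursing on $p-\ell$; but here the same difficulty reappears from another angle, since $R$ does not descend to quotient vector spaces, so the remainder is not automatically relation-avoiding and the recursion is not free. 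Finally, the weak-homotopy constraint ``no $(s,t)$ with $h(s,t)\,R\,h(s,0)$'' must be maintained throughout, and it is most dangerous exactly as $t\to\infty$, where every $h(s,t)$ piles onto $0\in R$; keeping the moving point $h(s,t)$ from relating to the fixed base point $h(s,0)$, uniformly in that regime and while the homotopy is busy reshaping the spiral, is where I expect most of the technical work to lie.
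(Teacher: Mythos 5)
You are attempting to prove a statement that the paper itself leaves open: the Spiral Conjecture is stated as a conjecture, and the paper only establishes it in the one-dimensional case (Theorem 2), by a completely different route --- lifting the path by a branch of the logarithm to the upper half-plane and running a Euclidean-algorithm-style induction on ``split pairs'' of disjoint translates of the lifted path. So your proposal should be judged as a new attack on an open problem, and as it stands it has genuine gaps beyond the ones you flag. The central unsupported step is the claim that the renormalized tails $p_n(t)=\|p(c_n)\|^{-1}p(t+c_n)$ can be upgraded, along an arithmetic progression of scales, to a self-similar limit $p_\infty(t+\tau)=\Lambda p_\infty(t)$. Nothing forces this: an arbitrary continuous relation-avoiding path decaying to $0$ has no monotonicity formula, no rigidity, and no reason for its blow-up limits along different subsequences to agree or to be orbits of a one-parameter linear group; ``extracting along an arithmetic progression and upgrading'' is exactly the missing idea, not a routine compactness argument. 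Equicontinuity of the tamed tail gives you subsequential limits, but only of the shifted paths on compact time intervals, and these limits carry no self-similar structure.

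The second genuine failure is the appeal to general position at the end. Each $R_i$ is a proper complex subspace of $V\times V$, so its real codimension can be exactly $2$; the constraint in the definition of weak homotopy is that the two-parameter family $(s,t)\mapsto(h(s,t),h(s,0))$ misses $R$, and a two-dimensional family generically meets a real-codimension-two subspace in isolated points. Transversality therefore does not let you perturb away intersections, either when you try to push the (possibly $R$-touching) limit $p_\infty$ off $R$ while keeping it a linear-flow orbit, or when you try to build the homotopy interpolating through the renormalization. The obstruction to avoidance is homotopy-theoretic, which is precisely the content of the conjecture, so the argument is circular at the point where it matters most; relatedly, you give no mechanism at all for maintaining condition (4) of the weak homotopy uniformly as $t\to\infty$, which you yourself identify as the hard part. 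If you want a tractable testing ground for your renormalization idea, compare it against the paper's one-dimensional proof: there the ``self-similar model'' (a logarithmic spiral) is not extracted as a blow-up limit but is found by a combinatorial argument about which lattice translates of the lifted path lie on which side of it, and it is that global combinatorial control, not scaling structure, that substitutes for the missing rigidity.
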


The following fact provides some evidence for this conjecture.

\begin{thm}
The spiral conjecture is true when $V$ is one dimensional.
\end{thm}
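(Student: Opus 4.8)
The plan is to reduce the statement to a concrete homotopy problem in the punctured plane and then solve that problem.

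\emph{Step 1: reduction to forbidden ratios and a single spiral.} Take $V=\C$. Each $R_i$ is a subspace of $\C^2$, so it is $\{0\}$, a line, or all of $\C^2$. If some $R_i=\C^2$, or if the diagonal line is among the $R_i$, then $pRp$ holds at every nonzero value (and $p$ is never $0$, since $0R0$), so there is no relation-avoiding origin path and the theorem is vacuously true. Otherwise, writing $C$ for the set of ratios $a/b$ with $(a,b)\in R$ and $a,b\neq 0$, one checks $C$ is finite, $1\notin C$, $C=C^{-1}$, and $aRb\iff a/b\in C$ for $a,b\neq0$. Since $\dim_{\C}V=1$, the target $q(t)=\sum x_i e^{a_i t}$ can involve only one linearly independent vector, so $q(t)=x e^{a t}$ with $x\neq0$; this tends to $0$ iff $\operatorname{Re}a<0$ and is relation-avoiding iff the curve $\{e^{a t}:t\in\R\}$ misses $C$. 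For each $c\in C$ the set of $a$ for which that curve meets $C$ is a countable union of lines through $0$, so I would fix once and for all a number $a$ with $\operatorname{Re}a<0$ avoiding all of them; the scalar $x$ plays no role and can be taken to be $p(0)$ at the end. Replacing $p$ by $p/p(0)$ (which changes neither the hypothesis nor, after multiplying a homotopy by the constant $p(0)$, the conclusion), we may assume $p(0)=1$ and must produce a weak homotopy from $p$ to $q(t)=e^{at}$.

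\emph{Step 2: passage to the logarithm.} Lift $\log\circ p$ to a continuous $\tilde f:[0,\infty)\to\C$ with $\tilde f(0)=0$. Then $\operatorname{Re}\tilde f(t)=\log|p(t)|\to-\infty$, and $\tilde f$ avoids the closed, locally finite set $\tilde C:=\exp^{-1}(C)$, which does not contain $0$. The decisive structural point is that the real part of every point of $\tilde C$ lies in the finite set $\{\log|c|:c\in C\}$, so $\tilde C$ is confined to a vertical strip $S=\{r_-\le\operatorname{Re}z\le r_+\}$. Writing $\tilde q(t)=at$ (which avoids $\tilde C$, by the choice of $a$), it now suffices to build a continuous $M:[0,1]\times[0,\infty)\to\C$ with $M(0,\cdot)=\tilde f$, $M(1,\cdot)=\tilde q$, $M(s,0)=0$ for all $s$, $\operatorname{Re}M(s,t)\to-\infty$ as $t\to\infty$ for each $s$, and $M(s,t)\notin\tilde C$ throughout: then $h(s,t):=\exp M(s,t)$ is the desired weak homotopy, since $h(s,t)/h(s,0)=\exp M(s,t)\notin C$, $h(s,t)\to0$, $h(0,\cdot)=p$, and $h(1,\cdot)=q$. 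So from now on the goal is to connect the proper ray $\tilde f$ to the straight ray $\tilde q$ through rays from $0$ with real part tending to $-\infty$ that avoid $\tilde C$; call such a homotopy \emph{admissible}.

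\emph{Step 3: straightening the tails.} Fix $\rho_0<\min(r_-,0)$ and let $P=\{\operatorname{Re}z<\rho_0\}$, an open convex half-plane lying strictly to the left of $S$ and of $0$, hence disjoint from $\tilde C$. Since $\operatorname{Re}\tilde f(t)\to-\infty$, the tail $\tilde f|_{[T,\infty)}$ lies in $P$ for $T$ large; and the key sub-lemma is that any two proper rays in $P$ with a common initial point and with real part tending to $-\infty$ are admissibly homotopic — proved by monotonizing the real part (replacing $\operatorname{Re}$ of the ray at time $t$ by its running minimum, a move that only pushes points further left, hence stays in $P$ and off $\tilde C$), reparametrizing time so the real part becomes affine, and finishing with a straight-line homotopy in the imaginary coordinate. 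Using this, I would first replace the tails of $\tilde f$ and of $\tilde q$ by horizontal leftward rays; then, sliding the two tail base-points together inside $P$, arrange that $\tilde f$ and $\tilde q$ agree outside a compact set and differ only by a loop $\ell$ based at $0$ in $\C\setminus\tilde C$. The problem is thereby reduced to the single assertion: for every loop $\ell$ based at $0$ in $\C\setminus\tilde C$, the ray $\ell\cdot\tilde q$ is admissibly homotopic to $\tilde q$ (equivalently, there is only one admissible-homotopy class of rays from $0$).

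\emph{Step 4: absorbing the loop — the main obstacle.} This is where I expect the real work to be. One cannot simply contract $\ell$: if $\ell$ winds around a point $c_0\in\tilde C$ it is not null-homotopic in $\C\setminus\tilde C$, and one cannot slide it off $c_0$ either (a loop crosses $c_0$ at the instant its enclosed region stops containing $c_0$). The resource to exploit is that $\tilde C$ is confined to the strip $S$ and meets each of the finitely many vertical lines $\{\operatorname{Re}z=\log|c|\}$ in a discrete set — in fact a union of finitely many arithmetic progressions of common difference $2\pi$ — so each such line has genuine gaps. The intended mechanism is: transport $\ell$ along $\tilde q$ to the tail base-point; then push it leftward, funneling it through the gaps of $\tilde C$ on each vertical line it must cross (handling it one thin lasso at a time, so that each piece fits through a gap); once past $S$ the loop lies in the simply connected region $P$ and contracts there, and the whole motion can be slid out along $\tilde q$ so that it never disturbs the far tail or the condition $\operatorname{Re}\to-\infty$. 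Making this funneling precise — choosing the thinning and the routes through the gaps so as to stay clear of every point of $\tilde C$ and of the basepoint throughout — is the one step I expect to be technically delicate; an alternative worth trying is to show directly that the stronger relation-avoiding condition ($\tilde f(t_2)-\tilde f(t_1)\notin\tilde C$ for \emph{all} $t_1,t_2$) already forces $\ell$ to be null-homotopic, which would make Step 4 immediate.
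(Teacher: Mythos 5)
Your Steps 1--3 are correct and cleanly done, but notice what they commit you to: by fixing the exponent $a$ once and for all, independently of $p$, you are asserting that \emph{all} relation-avoiding origin paths lie in a single weak homotopy class, i.e.\ exactly the assertion at the end of Step 3 that every based loop $\ell$ in $\C\setminus\tilde{C}$ can be absorbed. That assertion is where the entire content of the theorem lives, and Step 4 does not prove it. Moreover, the mechanism you sketch cannot work as described: if $\ell$ has nonzero winding number about some $c_0\in\tilde{C}$, then no homotopy of the loop inside $\C\setminus\tilde{C}$ --- however thin you slice it into lassos and however you route the pieces through the gaps in the vertical lines --- can produce a loop contained in the half-plane $P$, because the winding number about $c_0$ is invariant under any homotopy avoiding $c_0$ and is zero for every loop in the simply connected region $P$; the gaps are irrelevant to this homological obstruction (you state the obstruction yourself one sentence earlier and then propose a move that it forbids). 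The fallback suggestion is also false: relation-avoidance of $p$ does not force the comparison loop to be nullhomotopic. For $C=\{1/2,2\}$ the inward logarithmic spirals of both handednesses are relation-avoiding, and they differ from one another (after straightening tails) by a loop with winding number $\pm1$ about the point $-\log 2\in\tilde{C}$, so against any fixed target at least one of them yields a nontrivial $\ell$.

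This is also where your route departs from the paper, which never claims uniqueness of the class. The paper lifts to the upper half-plane, records for each forbidden ratio which integer translates $\ell_i(0)+k$ lie on which side of the lifted path (the integers $k_i$), and then proves, via the split-pair/derived-split-pair descent (a Euclidean-algorithm style argument), that a single angle $\theta$ separates all the points $\ell_i(0)+k_i$ from all the points $\ell_j(0)+k_j+1$; the target spiral is then chosen with that pitch, i.e.\ adapted to $p$. That separation argument is the heart of the proof and has no counterpart in your proposal. If you insist on a fixed target spiral, the only plausible way to rescue Step 4 is not a homotopy of the loop at all but an exploitation of how weak the definition of weak homotopy is: condition (1) is pointwise in $s$ and condition (4) involves only the basepoint, so one can transport a lasso along the common tail to a time $T(s)$ with $T(s)\to\infty$ and let it escape at $t=\infty$ in the limit; that is a genuinely different mechanism from funneling through gaps, and it would have to be constructed and checked carefully. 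As written, Step 4 is a genuine gap, and it is precisely the step carrying the difficulty.
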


Finally, we will prove the following implication of Conjecture 2.

\begin{thm}
Assume the spiral conjecture. Then any Jordan curve $\gamma$ which is smooth except at finitely many points has an inscribed square. The Jordan curve is permitted to be arbitrarily complicated near these finitely many singularities. 
\end{thm}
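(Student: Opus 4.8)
The plan is to argue by contradiction. Suppose $\gamma$ is a bad Jordan curve that is smooth away from a finite set of points. By Theorem 1 it carries a square envelope $(e_1,e_2)$, and the strategy is to distill from this envelope a relation-avoiding origin path $p$ in the space of squares, to record the topological information that condition (4) of the envelope forces on $p$, to invoke the spiral conjecture to replace $p$ by a spiral $q(t)=\sum_i x_i e^{a_it}$ weakly homotopic to it, and finally to show that no such spiral can carry that topological information — a contradiction, so $\gamma$ must have had an inscribed square after all.

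First I would fix the setup. Identify $\R^2$ with $\C$ and let $V=\C^2$ be the space of edge-marked squares, where $(a,b)$ names the square with corners $a,b,S_2(a,b),S_1(a,b)$; note $S_1$ and $S_2$ are $\C$-linear functions of $(a,b)$. Let $R$ be the symmetric relation ``some corner among the first two corners of one square coincides with some corner among the last two corners of the other''. Each defining condition is the vanishing of a single $\C$-linear functional on $V\times V$, so $R$ is a finite union of linear relations $R_i\subseteq V\times V$, as the framework requires. The envelope now furnishes a relation-avoiding path for free: writing $\sigma(x)=(e_1(x),e_2(x))$, a first-two corner of $\sigma(x_1)$ lies in the open exterior of $\gamma$ by condition (1), while a last-two corner of $\sigma(x_2)$ lies in the open interior of $\gamma$ by condition (2), so these can never coincide for any $x_1,x_2$; in particular $\sigma$ is nowhere $R$-self-related.

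Next I would turn $\sigma$ into a relation-avoiding origin path. By condition (3) the square $\sigma(x)$ collapses to a point as $x\to\pm\infty$, and here is where the hypothesis on $\gamma$ is essential: one shows that arbitrarily small squares that straddle $\gamma$ (first edge outside, opposite edge inside) must accumulate only at non-smooth points of $\gamma$, and since there are finitely many of these, $e_1(x)$ converges as $x\to+\infty$. Translating so that the limit is $0$ and reparametrizing $[x_0,\infty)$ as $[0,\infty)$ for $x_0$ very negative, we obtain a continuous $p:[0,\infty)\to V$ with $\lim_{t\to\infty}p(t)=0$, relation-avoiding because $\sigma$ was, nondegenerate and nonzero at $t=0$. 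By condition (4), applied with both $\lambda$ and $-x_0$ large, the closed curve obtained by following the first-edge curve $t\mapsto\pi_1(p(t))$ from $\pi_1(p(0))$ to the origin, then back along the second-edge curve $t\mapsto\pi_2(p(t))$, then closing up by a segment, has winding number $1$ about every point of the interior of $\gamma$. I would encode this as the homotopy class, relative to the moving base point, of the loop obtained near $t=\infty$ from $t\mapsto(p(t),p(0))$ in the complement $(V\times V)\setminus R$, and check that this class is a weak-homotopy invariant: along a weak homotopy $h$, the pair $(h(s,t),h(s,0))$ stays off $R$, and keeping $h(s,0)$ nondegenerate and nonzero shows the class is unchanged. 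The winding condition forces this class to be nontrivial — intuitively, a trivial class can be homotoped to a path whose closed-up first-edge curve bounds only a ``logarithmic wedge'' emanating from the origin, which cannot enclose a full open planar region with winding number $1$.

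Finally I would apply the spiral conjecture: $p$ is weakly homotopic to $q(t)=\sum_{i=1}^m x_i e^{a_it}$ with the $x_i$ linearly independent in $V=\C^2$, so $m\le 2$, and with $\mathrm{Re}(a_i)<0$ for every $i$ since $q(t)\to 0$. It remains to compute the invariant for $q$ and see it is incompatible with the value forced above. The first-edge curve $t\mapsto\pi_1(q(t))$ is asymptotic as $t\to\infty$ to a single complex line, the dominant exponential, and similarly for the second edge; closing these up through the origin gives a curve whose winding behaviour against the linear relation $R$ is far too rigid. Concretely, if some $\mathrm{Im}(a_i)\neq 0$ the first-edge curve wraps infinitely often around $0$, so the relevant winding numbers are either unbounded or fail to stabilize as $\lambda$ grows, contradicting stabilization at $1$; and if all $a_i$ are real the closed-up curve bounds a finite union of logarithmic wedges out of the origin, which cannot have winding number $1$ throughout an open region. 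Either way the square envelope cannot exist, so $\gamma$ has an inscribed square. The two places I expect the real work to be are (i) the pinching claim — that a square envelope of a curve smooth off finitely many points collapses at a genuine limit point, which needs a delicate local analysis of small straddling squares near a smooth arc combined with the winding condition — and (ii) isolating exactly which weak-homotopy invariant captures condition (4) and evaluating it on spirals, since weak homotopy preserves neither full relation-avoidance nor the inside/outside relationship with $\gamma$, so that part of the argument must be carried out entirely in terms of $R$ and the decay to the origin.
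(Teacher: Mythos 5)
Your overall skeleton does match the paper's: argue by contradiction from Theorem 1, encode squares as points of $V=\C^2$ with the relation ``a first corner of one square equals an interior corner of the other,'' extract a relation-avoiding origin path from the envelope, find an invariant that survives weak homotopy via clause (4) of its definition, invoke the spiral conjecture, and rule out exponential paths. But two of the steps you flag as ``the real work'' are not merely unfinished; as sketched they are wrong. First, the pinching claim: it is false that small squares with $e_1,e_2$ outside and $S_1,S_2$ inside can only accumulate at non-smooth points -- a straight line (hence a smooth arc, locally) separates a small square into two adjacent corners versus the other two, so such squares accumulate at smooth points without difficulty. What is actually true (the paper's Lemma 2) is that failure of the limit at an end forces the winding parity $\sigma^{\pm}$ at that end to be $+1$, while envelope property (4) forces the two ends to have opposite parities; hence the limit is guaranteed at only one end, and one must first pass to a ``positively oriented'' envelope (possibly reversing time) before restricting to $[0,\infty)$ and translating. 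Your assertion that $e_1(x)$ converges as $x\to+\infty$ need not hold for the given parametrization.

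Second, the endgame on spirals does not go through. The only data preserved by a weak homotopy is the non-relation between $h(s,t)$ and $h(s,0)$; the usable invariant is therefore the parity of the winding of the loop (follow $f_2$ from $0$ to $f_2(0)$, segment to $f_1(0)$, follow $f_1$ back to $0$) around the two interior corners $S_1(f_1(0),f_2(0))$ and $S_2(f_1(0),f_2(0))$ of the time-zero square -- not the ``winding number $1$ around every interior point of $\gamma$'' condition, which you correctly observe is not preserved but then still invoke (``contradicting stabilization at $1$,'' ``cannot have winding number $1$ throughout an open region''). Also, ``the dominant exponential'' only disposes of the case $\mathrm{Re}(a_1)\neq\mathrm{Re}(a_2)$; the hard case is equal real parts, where relation-avoidance forces all four corner paths into the epicyclic normal form $e^{\alpha+q\beta p(t-t_0)}\bigl(1+re^{ip(t-t_0)}\bigr)$, and the paper needs a separate lemma (the radial-set/epitrochoid Lemma 3) to shrink the amplitudes $r$ to zero through relation-avoiding origin paths, reducing to a pure logarithmic spiral. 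The pure spiral is then excluded not by a winding-stabilization argument but by an area comparison: the region swept by the segment from $f_1$ to $f_2$ has the same area as the region swept by the segment between the two interior corners, while the parity condition forces strict containment of one region in the other. Without the orientation bookkeeping, the correct weak-homotopy invariant, the equal-real-parts reduction, and the area argument, the proposed proof has genuine gaps at exactly the points where the paper's proof does its work.
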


We propose that delving into the study of relation avoiding origin paths could be a potential approach towards solving the inscribed square problem. At the end of the paper, we will make more conjectures about relation avoiding origin paths, which we hope might guide future research.

\section{Square Envelopes}
 
If $\gamma$ is a bad Jordan curve, we will define an integer $b_\gamma$ which we call the bad wrapping number of $\gamma$. This integer is only well-defined for bad Jordan curves. By counting inscribed squares in a generic approximation of $\gamma$, we will prove that $b_\gamma$ is odd. Then, we will use this fact to construct a square envelope of the Jordan curve. 

Let $\gamma$ be a Jordan curve. We may choose a continuous function $f: \R^2\to \R$ with the property that if $a$ is in the interior of $\gamma$, then $f(a) < 0$, if $a$ is on the image of $\gamma$, then $f(a) = 0$, and if $a$ is outside of $\gamma$, then $f(a) > 0$. Using this function, we can define a function $g_{f,\gamma}: S^1\times S^1\to \R^2$, by the formula $$ g_{f,\gamma}(x,y) = (f(S_1(\gamma(x),\gamma(y))),f(S_2(\gamma(x),\gamma(y)))).$$

If $\gamma$ is a bad Jordan curve, then $g_{f,\gamma}(x,y) = (0,0)$ if and only if $x = y$, because if we have $x\neq y$ and $g_{f,\gamma}(x,y) = (0,0)$, then $\gamma(x), \gamma(y), S_2(\gamma(x),\gamma(y)),S_1(\gamma(x),\gamma(y)))$ form the vertices of an inscribed square.

Let $C = \{(x,y)\in S^1\times S^1: x\neq y\}$. Topologically, $C$ is an open cylinder. By the above proposition, we see that if $\gamma$ is a bad Jordan curve, then we can restrict the domain of $g_{f,\gamma}$ to $C$ to get a map $$g_{f,\gamma}|_{C} : C\to \R^2\setminus\{(0,0)\}.$$

We now fix homotopy equivalences between $C$, $\R^2\setminus \{(0,0)\}$, and $S^1$. Our homotopy equivalence $C\to S^1$ is to simply take the first coordinate of the ordered pair. Our homotopy equivalence $\R^2\setminus \{(0,0)\}\to S^1$ is radial projection onto the unit circle, which we orient counterclockwise. 

Using these homotopy equivalences, we see that, $g_{f,\gamma}|_{C}$ induces a map $S^1\to S^1$ up to homotopy, and therefore gives us an element of $\pi_1(S^1)\simeq \Z$. This integer is defined to be $b_\gamma$.

$b_\gamma$ does not depend on the choice of $f$, because given two choices of $f$, one can be continuously transformed into the other while always remaining a valid choice of $f$. This induces a homotopy between the resulting maps $S^1\to S^1$, which implies that the value of $b_\gamma$ is constant.

\begin{lem}
For any bad Jordan curve, $b_\gamma$ is odd.
\end{lem}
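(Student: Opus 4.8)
The plan is to compute $b_\gamma \bmod 2$ by comparing $\gamma$ with a nearby generic smooth curve and counting the latter's inscribed squares. Fix a convenient defining function, for instance $f$ the signed distance to $\im(\gamma)$; for a curve $\gamma'$ that is $C^0$-close to $\gamma$ the signed distance $f'$ to $\im(\gamma')$ is then a valid defining function for $\gamma'$ that is uniformly close to $f$. Unwinding the homotopy equivalences in the definition of $b_\gamma$ (composing the first-coordinate map $C \to S^1$ with the homotopy inverse $x \mapsto (x,x+\tfrac12)$), we see that $b_\gamma$ is exactly the degree of the loop $x \mapsto g_{f,\gamma}(x,x+\tfrac12)$ in $\R^2\setminus\{(0,0)\}$; since $\gamma$ is bad, $g_{f,\gamma}$ is nowhere zero on the compact cylinder $C$, so this loop is bounded away from the origin. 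Writing $\Gamma_s = \{(x,x+s) : x\in S^1\}\subset C$ for $s\in(0,1)$, the quantity $\deg(g_{f',\gamma'}|_{\Gamma_s})$ is defined whenever $g_{f',\gamma'}$ does not vanish on $\Gamma_s$, is locally constant in $s$, and jumps by the local index of a zero of $g_{f',\gamma'}$ as $s$ crosses the level of that zero.

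Now pick a smooth regular Jordan curve $\gamma'$ that is $C^0$-close to $\gamma$ and generic, so that $g_{f',\gamma'}$ is transverse to $(0,0)$ on $C$: its zero set is then finite, each zero of index $\pm 1$, and (as noted in the introduction) each zero records an inscribed square of $\gamma'$ together with a choice of one of the four counterclockwise directed edges of that square, so each geometric inscribed square accounts for exactly four zeros. Because $\gamma'$ and $f'$ are close to $\gamma$ and $f$ and $g_{f,\gamma}$ is bounded away from $0$ on $\Gamma_{1/2}$, the loops $g_{f',\gamma'}|_{\Gamma_{1/2}}$ and $g_{f,\gamma}|_{\Gamma_{1/2}}$ are homotopic in $\R^2\setminus\{(0,0)\}$, so $\deg(g_{f',\gamma'}|_{\Gamma_{1/2}}) = b_\gamma$. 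Next I would localize the zeros: a compactness argument shows that, because $\gamma$ has no inscribed square, the ``defect from being a square'' of a quadruple of points of $\gamma$ is bounded below on the compact set of quadruples whose four parameters do not all lie in a short arc of $S^1$; hence for $\gamma'$ close enough every inscribed square of $\gamma'$ has all four parameters in an arbitrarily short arc, so every zero of $g_{f',\gamma'}$ lies on some $\Gamma_s$ with $s$ near $0$ or near $1$. Finally, for $s$ genuinely small one computes that $g_{f',\gamma'}(x,x+s)$ is, to first order in $s$, a negative multiple of $(1,1)$ independent of $x$ (its $s$-derivative at $s=0$ equals $(\nabla f'(\gamma'(x))\cdot T\dot\gamma'(x))\,(1,1)$, a nonzero negative multiple of $(1,1)$ for a counterclockwise curve), so $\deg(g_{f',\gamma'}|_{\Gamma_s}) = 0$ for small $s$.

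Combining these, $b_\gamma = \deg(g_{f',\gamma'}|_{\Gamma_{1/2}})$ is the sum of the local indices of all zeros of $g_{f',\gamma'}$ whose $s$-coordinate lies in $(0,\tfrac12)$, so $b_\gamma$ is congruent mod $2$ to the number of such zeros. A short combinatorial bookkeeping — using that a small inscribed square has three tiny parameter-gaps and one gap near $1$ — shows that a geometric inscribed square of $\gamma'$ contributes an odd number of these zeros exactly when $\gamma'$ meets its four vertices in the same cyclic order in which they occur around the square, and an even number otherwise. Thus $b_\gamma$ is congruent mod $2$ to the number of cyclically ordered inscribed squares of $\gamma'$, and the proof is completed by the classical fact that a generic smooth Jordan curve inscribes an odd number of squares — more precisely, that the suitably signed count of its cyclically ordered inscribed squares is $\pm 1$ — which is itself provable by a degree/transversality argument or by deforming $\gamma'$ to an ellipse.

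I expect the two genericity inputs to be the main obstacle. First, one must produce the generic smooth approximation $\gamma'$ with only isolated, transverse square-zeros while keeping it $C^0$-close to a possibly very wild $\gamma$; this is a routine but delicate transversality argument, made more awkward by the fact that such a $\gamma'$ is forced to have large curvature. Second, the passage from ``number of zeros with $s$-coordinate in $(0,\tfrac12)$'' to ``number of cyclically ordered inscribed squares of $\gamma'$ mod $2$'', together with a clean appeal to the classical odd-count theorem, must be done with care, since the naive count of all inscribed squares, ignoring cyclic order, need not have the correct parity.
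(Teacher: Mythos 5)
Your overall strategy is the same as the paper's: approximate the bad curve by a generic smooth curve $\gamma'$, note that all zeros of $g_{f',\gamma'}$ localize near the diagonal (parameters of any inscribed square lie in a short arc), observe that the loop pushed just off the diagonal has degree zero because the small squares keep two corners on one side of the curve, and then read off $b_\gamma$ mod $2$ as the number of zeros whose gap parameter lies in $(0,\tfrac12)$, sorted by the cyclic type of the inscribed square. However, your combinatorial bookkeeping step contains a concrete error. A zero of $g_{f',\gamma'}$ is an ordered pair $(x,y)$ such that $(\gamma'(x),\gamma'(y))$ is a counterclockwise-directed edge of an inscribed square. If the square is reverse-cyclically ordered on the curve (the paper's type III: curve order $a,d,c,b$ for a square labeled $a,b,c,d$ counterclockwise), then of its four directed edges $(a,b),(b,c),(c,d),(d,a)$ exactly one, namely $(a,b)$, has its parameters in the short-gap region $s\in(0,\tfrac12)$; the other three have gaps near $1$. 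So a type III square contributes an \emph{odd} number (one) of the zeros you are counting, not an even number as you claim; the paper's case analysis gives $3,2,2,1,2,2$ zeros for the six vertex orderings. Consequently your reduction ``$b_\gamma \equiv$ number of cyclically ordered inscribed squares mod $2$'' is wrong: the correct reduction is $b_\gamma \equiv (\#\,\text{type I}) + (\#\,\text{type III}) \pmod 2$.

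Because of this, the classical fact you invoke at the end (odd count of gracing, i.e.\ cyclically ordered, squares of a generic curve) is not enough to close the argument: you also need that the number of reverse-ordered (type III) inscribed squares of a generic Jordan curve is even. That parity statement is true, and the paper proves it together with the type I statement (its Proposition 1) by a single bordism argument: the space $\tilde{C}_4(S^1)$ has three components, so along a generic homotopy the inscribed squares of each type form a compact $1$-manifold cobounding the counts at the two ends, and an ellipse has one type I square and no type II or III squares. If you either correct the type III contribution count and add the evenness of the type III count (by this bordism argument or an equivalent citation), or redo the bookkeeping and carry both parities through, your proof goes through and is essentially the paper's.
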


The proof of this lemma will be given later in this section. The main idea behind the proof is that each inscribed square of a generic smooth approximation of $\gamma$ contributes parity to $b_\gamma$ in a way that depends on the cyclic order of the vertices of the square on the Jordan curve. The parity of inscribed squares with a given cyclic ordering is independent of the Jordan curve, so we can simply calculate the parity of $b_\gamma$ by summing the contributions from each square type. The result is odd parity.

Lemma 1 can be used to prove Theorem 1, which we will do at the end of this section. The main idea is that because $b_\gamma \neq 0$, the map $g_{f,\gamma}:C\to \R^2\setminus\{(0,0)\}$ wraps nontrivially around the origin. This implies that there must be a path between the two ends of the cylinder $C$ that maps into the lower left quadrant of the plane under $g_{f,\gamma}$. This path is almost a square envelope, except that $e_1$ and $e_2$ are on the Jordan curve rather than being inside its open exterior. This can be remedied by simply pushing the paths off of the curve slightly.\\

For a manifold $M$, let $\tilde{C}_4(M)$ denote the manifold of cyclically ordered quadruples of distinct points in $M$. In other words, if $C_{4\;\text{dist}}(M)$ is the subset of $M^4$ consisting of ordered quadruples of distinct points, then $\tilde{C}_4(M)  = C_{4\;\text{dist}}(M)/(\Z/4\Z)$, where $\Z/4\Z$ acts by cyclic permutation of the entries of the 4-tuple. Let $Sq$ denote the the submanifold of $\tilde{C}_4(\R^2)$ consisting of quadruples of points which form squares labeled counterclockwise.

We call a Jordan curve $\gamma: S^1\to \R^2$ \emph{generic} if, within $\tilde{C}_4(\R^2)$, the subspace $\tilde{C}_4(\im(\gamma))$ intersects transversely with $Sq$.  

Let $h: S^1\times[0,1]\to \R^2$ be a smooth homotopy between two generic Jordan curves for which $h(\cdot,t)$ is a smooth embedding for all $t\in [0,1]$. We say that $h$ is a \emph{generic homotopy} if, within $\tilde{C}_4(\R^2)\times[0,1]$, the subspace $\{(q,t): q\in \tilde{C}_4(\im(h(\cdot,t))\}$ intersects transversely with the subspace $Sq\times[0,1]$.

By the transversality theory found in \cite{cantarella}, we have the following facts about generic Jordan curves.

\begin{itemize}
\item[1)] Any Jordan curve can be approximated arbitrarily well in the $C^0$ topology by generic Jordan curves.
\item[2)] Any two generic Jordan curves have a generic homotopy between them. 
\item[3)] For a generic homotopy $h$, the intersection $\{(q,t): q\in \tilde{C}_4(\im(h(\cdot,t))\} \;\cap\;Sq\times[0,1]$ is a compact 1-manifold with boundary. The boundary of this manifold consists of the inscribed squares of the two generic Jordan curves at the beginning and end of the homotopy.
\end{itemize}

Note that the manifold $\tilde{C}_4(S^1)$ has three connected components. This corresponds to three types of inscribed square.

\begin{dfn}
Let $\gamma$ be a Jordan curve. An inscribed square of $\gamma$ is called type I if the counterclockwise order of points on the square is the same as that of the Jordan curve. Such squares are also called gracing squares. A square is called type II if when we label the vertices of the square as $1,2,3,4$ in counterclockwise order, these vertices appear in the order $1,3,2,4$ when we go counterclockwise along the Jordan curve. Finally, a square is called type III if the counterclockwise order of the vertices is opposite to the counterclockwise order of the Jordan curve. 
\end{dfn}

\begin{figure}[H]
\centering
\includegraphics[scale = 0.4]{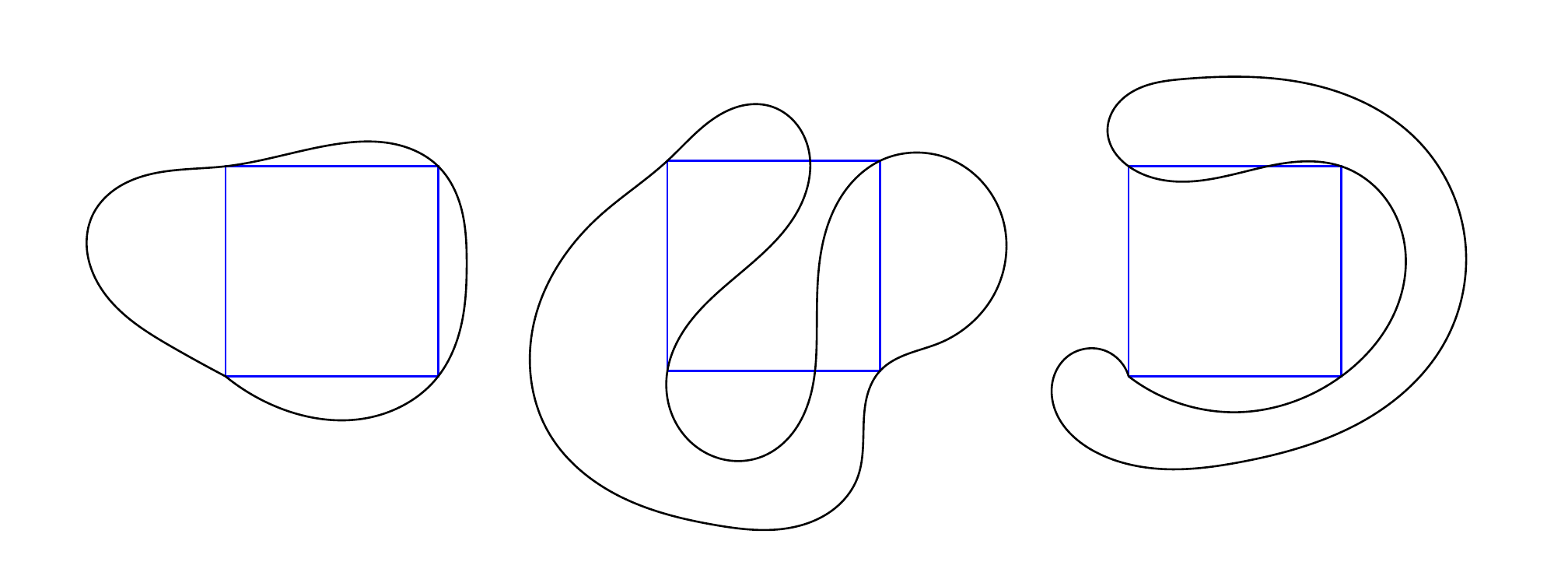}
\caption{Inscribed squares of type I, II, and III respectively.}
\end{figure}

\begin{prp}
If $\gamma$ is a generic Jordan curve, then of the inscribed squares of $\gamma$, an odd number of them are type I, an even number are type II, and an even number are type III.
\end{prp}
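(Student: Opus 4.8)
### Proof Proposal

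\textbf{The plan is to} compute the three parities simultaneously by reducing to a single, maximally symmetric model curve and invoking the cobordism invariance furnished by generic homotopies. The point of Proposition 1 is that the numbers $(\#\mathrm{I}, \#\mathrm{II}, \#\mathrm{III}) \bmod 2$ are homotopy invariants of generic Jordan curves: by fact (3) above, if $h$ is a generic homotopy between generic curves $\gamma_0$ and $\gamma_1$, then the 1-manifold $\{(q,t) : q \in \tilde C_4(\im(h(\cdot,t)))\} \cap (Sq \times [0,1])$ is a compact 1-manifold whose boundary is the disjoint union of the inscribed squares of $\gamma_0$ and of $\gamma_1$. Since $\tilde C_4(S^1)$ has three components (the three square types) and the type of a square is locally constant along this cobordism, each of the three components of the cobordism manifold pairs up boundary points of a single fixed type; hence $\#\mathrm{I}(\gamma_0) \equiv \#\mathrm{I}(\gamma_1)$, and likewise for II and III, all mod $2$. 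Combined with fact (1) and (2) — every Jordan curve is approximable by generic ones, and any two generic curves are connected by a generic homotopy — it suffices to exhibit \emph{one} generic Jordan curve and count its inscribed squares of each type by hand.

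\textbf{First I would} take the model curve to be a round circle perturbed slightly into an ellipse (the round circle itself is not generic, since it has a one-parameter family of inscribed squares). On an ellipse, the inscribed squares can be enumerated directly: by a standard classical computation, a generic ellipse that is not a circle has exactly one inscribed square, namely the one whose diagonals lie along the major and minor axes. One checks its type: the four vertices occur in counterclockwise order around the ellipse in the same cyclic order as around the square, so it is type I. Hence for the ellipse $\#\mathrm{I} = 1$ (odd), $\#\mathrm{II} = 0$ (even), $\#\mathrm{III} = 0$ (even), which is exactly the assertion of the proposition. By the invariance established in the previous paragraph, the same parities hold for every generic Jordan curve.

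\textbf{The main obstacle} I anticipate is the bookkeeping around the cobordism argument rather than the ellipse computation. Two points need genuine care. First, the cobordism from fact (3) is stated for generic \emph{smooth} homotopies between generic Jordan curves, so one must make sure the approximation scheme stays within the smooth category and that the endpoints of the cobordism 1-manifold are transverse (so that "boundary = inscribed squares of the endpoints" literally holds with no multiplicity subtleties); this is where the cited transversality theory of \cite{cantarella} does the work. Second, one must verify that "square type" is genuinely a locally constant function on $Sq \times [0,1]$ restricted to the cobordism manifold — i.e. that as $t$ varies the cyclic order of the four marked preimages on $S^1$ cannot jump. This is clear because the four preimage points stay distinct throughout (they parametrize points of $\tilde C_4(\im h(\cdot,t))$, quadruples of \emph{distinct} points), so their cyclic order on $S^1$ varies continuously in a discrete set and is therefore constant on each connected component of the cobordism. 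Once these two points are nailed down, the proposition follows immediately from the single-curve computation on the ellipse.
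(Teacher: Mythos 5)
Your proof is correct and follows essentially the same route as the paper: the parity of each square type is a homotopy invariant because the generic-homotopy cobordism 1-manifold splits into clopen pieces according to the three components of $\tilde{C}_4(S^1)$, and the count is then carried out on a non-circular ellipse (one type I square, none of types II or III). One small slip worth fixing: the unique inscribed square of a non-circular ellipse has its sides parallel to the axes and its diagonals along the lines $y=\pm x$, not its diagonals along the major and minor axes, though this does not affect the count or the type.
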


\begin{proof}
There are three connected components of $\tilde{C}_4(\im(\gamma))$ corresponding to the three types of inscribed squares when we intersect with $Sq$. This means that if $h$ is a generic homotopy between generic Jordan curves, then the 1-manifold $\{(q,t): q\in \tilde{C}_4(\im(h(\cdot,t))\} \;\cap\;Sq\times[0,1]$ can be separated into three distinct clopen pieces, one for each type of inscribed square. This proves that the parity of each type of inscribed square is invariant of which generic Jordan curve we choose. Finally we can compute the parities by finding the inscribed squares of any generic Jordan curve.  The quintessential generic Jordan curve is a non-circular ellipse. This curve has one gracing square, but no inscribed square of type II or III. 
\end{proof}

Let $\gamma$ be a generic Jordan curve, and let $f$ be a smooth function $\R^2\to \R$ which has $\gamma$ as its zero set, is negative inside of $\gamma$, positive outside of $\gamma$, and has non-vanishing gradient on $\gamma$. Since $\gamma$ is a smooth embedding, such an $f$ always exists. Now, we will consider $g_{f,\gamma}: (S^1)^2 \to \R^2$, as defined in the previous section. 

We see that the zeros of $g_{f,\gamma}$ consist of the diagonal $\Delta = \{(a,a): a\in S^1\}$, as well as four points for each inscribed square in $\tilde{C}_4(\im(\gamma))\cap Sq$, one point for each side of the square. Furthermore, at these zeros corresponding to inscribed squares, $g_{f,\gamma}$ has nonsingular derivative because $\gamma$ is generic and $f$ has non-vanishing gradient on $\im(\gamma)$.  Let $X = \{p\in (S^1)^2: g_{f,\gamma}(p) \neq (0,0)\}$. Let $\omega_\gamma$ be the cohomology class in $H^1(X;\Z/2\Z)$ given by pulling back the nontrivial class of $H^1(\R^2\setminus \{(0,0)\};\Z/2\Z)$ under $g_{f,\gamma}|_X$. A small loop around any of the zeroes corresponding to sides of inscribed squares will evaluate nontrivially under $\omega$ because $g_{f,\gamma}$ has nonsingular derivative at these zeroes. Furthermore, a loop obtained by pushing the diagonal off to one side or the other will evaluate trivially under $\omega_\gamma$ because it will correspond to a small square sliding around the Jordan curve with two corners on $\gamma$ and the other two corners off to one side of $\gamma$, so the corresponding loop in $\R^2\setminus \{(0,0)\}$ will remain in just one quadrant of the plane and therefore cannot wrap around the origin.

Putting this all together, we see that if $\ell$ is a simple closed curve in $X$ which has the homotopy class of the diagonal in $S^1\times S^1$, then $\omega_\gamma(\ell)$ is equal to the parity of the number of zeroes of $g_{f,\gamma}$ in either connected component of $(S^1)^2\setminus(\Delta \cup \ell)$.

Let $\Delta'$ be the anti-diagonal of $(S^1)^2$, the loop consisting of pairs $(a,b)$ where $a$ and $b$ are antipodes. Then $(S^1)^2\setminus(\Delta\cup \Delta')$ has two connected components, which we call $A$ and $B$. The component $A$ is the set of pairs $(a,b)$ for which the angle from $a$ to $b$ is less than $\pi$ and the component $B$ is the set of pairs for which the angle from $a$ to $b$ is greater than $\pi$.

If $\gamma$ is a bad Jordan curve, and $\gamma'$ is a sufficiently $C^0$-close generic approximation, then the parity of $b_\gamma$ is equal to $\omega_{\gamma'}(\Delta')$. We can therefore compute $b_\gamma$ by counting the zeroes of $g_{f,\gamma'}$ in $A$.

\begin{prp}
If $\gamma$ is a bad Jordan curve, and $\gamma'$ is a sufficiently $C^0$-close generic approximation, then each type I inscribed square of $\gamma'$ has exactly three of its corresponding zeroes of $g_{f,\gamma'}$ in $A$, each type II inscribed square has exactly two of its corresponding zeroes in $A$, and each type III inscribed square has exactly one of its corresponding zeroes in $A$.
\end{prp}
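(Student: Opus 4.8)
The plan is to exploit the hypothesis that $\gamma$ is \emph{bad}. Since $\gamma$ has no inscribed square, the inscribed squares of a sufficiently $C^0$-close generic curve $\gamma'$ must be uniformly tiny, and hence the four parameters on $S^1$ mapping to the vertices of such a square all lie inside a single short arc. Once this is established, the proposition becomes a finite combinatorial check on cyclic orders.

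\emph{Step 1 (squares are small).} Suppose not: then there are generic curves $\gamma'_n \to \gamma$ in $C^0$ carrying inscribed squares $\sigma_n$ of diameter bounded below by some $\epsilon_0 > 0$. Their vertex $4$-tuples stay in a fixed bounded region near $\im(\gamma)$, so after passing to a subsequence they converge to a $4$-tuple forming a counterclockwise square $\sigma$ of diameter $\ge \epsilon_0$ all of whose vertices lie in the closed set $\im(\gamma)$ — that is, an inscribed square of $\gamma$, contradicting badness. Hence for every $\epsilon_0 > 0$ we may take $\gamma'$ close enough to $\gamma$ that all its inscribed squares have diameter $< \epsilon_0$.

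\emph{Step 2 (smallness in the image forces smallness in the parameter).} Since $\gamma$ is an embedding, $c(\eta) := \min\{\|\gamma(u)-\gamma(v)\| : d_{S^1}(u,v) \ge \eta\} > 0$ for every $\eta > 0$, so $\|\gamma'-\gamma\|_\infty < \delta$ together with $\|\gamma'(u)-\gamma'(v)\| < c(\eta) - 2\delta$ forces $d_{S^1}(u,v) < \eta$. Taking $\eta = \pi/2$ and combining with Step 1, we may choose $\gamma'$ so close to $\gamma$ that for any inscribed square $Q_1Q_2Q_3Q_4$ (labeled counterclockwise) the preimages $t_1,\dots,t_4 \in S^1$ are pairwise at arc-distance $< \pi/2$, hence all contained in a single open arc $I \subset S^1$ of length $< \pi$.

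\emph{Step 3 (combinatorics).} We already know each of the four zeros of $g_{f,\gamma'}$ attached to the counterclockwise square $Q_1Q_2Q_3Q_4$ is the ordered pair $(t_i,t_{i+1})$, indices mod $4$, coming from a counterclockwise side. Orient $I$ to agree with $S^1$; since all $t_i$ lie in $I$, the pair $(t_i,t_j)$ lies in $A$ — i.e. the counterclockwise angle from $t_i$ to $t_j$ is less than $\pi$ — exactly when $t_i$ precedes $t_j$ in the linear order that $I$ induces on $\{t_1,\dots,t_4\}$, and lies in $B$ otherwise. That linear order is precisely the cyclic order in which the vertices $Q_i$ are encountered along $\gamma'$, which by Definition 3 is $t_1t_2t_3t_4$ for type I, $t_1t_3t_2t_4$ for type II, and $t_1t_4t_3t_2$ for type III. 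Testing the four pairs $(t_1,t_2),(t_2,t_3),(t_3,t_4),(t_4,t_1)$ against these three orders gives $3$, $2$, and $1$ pairs in $A$ respectively, which is the claim. The substance of the argument is the compactness step (Step 1) that makes the squares small; Steps 2 and 3 are routine.
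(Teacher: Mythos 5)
Your proposal is correct and follows essentially the same route as the paper: a compactness argument forces the inscribed squares of a sufficiently $C^0$-close generic approximation to be small, smallness together with injectivity of $\gamma$ confines the four parameter values to a short arc of $S^1$, and a finite check of orders then yields the counts $3$, $2$, $1$ for types I, II, III. The only difference is bookkeeping: the paper fixes the linear order $w,x,y,z$ of the parameters along the arc and enumerates all six vertex assignments (four of which are type II), whereas you test a single representative cyclic order per type; since the count of zero pairs in $A$ is unchanged under cyclic relabeling of the square's vertices and under the choice of starting point of the arc, the two checks agree.
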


\begin{proof}
Let $\gamma_1,\gamma_2,\gamma_3,...$ be a sequence of generic Jordan curves that limit to our bad Jordan curve $\gamma$ in the $C^0$ topology. Let $s_n$ be the side length of the largest inscribed square of $\gamma_n$. We know that $\lim_{n\to \infty}s_n = 0$ because otherwise there would be a convergent subsequence of squares that limit to an inscribed square of our bad Jordan curve. We can give $S^1$ a metric by identifying it with the unit circle in $\R^2$ and using euclidean distance. Using this identification, we let $M$ be the set of all pairs $(a,b)\in (S^1)^2$ which have $\|a-b\| \geq 1$. Let $N\in \Z$ be sufficiently large that both $\sup_{x\in S^1}\|\gamma_N(x) - \gamma(x)\|$ and $s_N$ are smaller than the quantity $\varepsilon = \frac{1}{4} \cdot \min_{(a,b)\in M} \|\gamma(a) - \gamma(b)\|$. We claim that $\gamma' = \gamma_N$ is an approximation for which the proposition holds. 

Let $a,b,c,d$ be the vertices of an inscribed square of $\gamma_N$, labeled counterclockwise. Let $Q$ be the set of four points in $S^1$ which map onto $\{a,b,c,d\}$ under $\gamma_N$. We claim that the diameter of $Q$ is less than one. To prove this, suppose there were elements $x$ and $y$ in $Q$ with $\|x-y\|\geq 1$. Then $(x,y)\in M$, so $\|\gamma(x) - \gamma(y)\| \geq 4\varepsilon$, so $\|\gamma_N(x) - \gamma_N(y)\|\geq 2\varepsilon$. However, $2\varepsilon >  \sqrt{2} \cdot  s_N$ and $\gamma_N(x)$ and $\gamma_N(y)$ are vertices of a square with side length at most $s_N$, so this is impossible. 

Since the diameter of $Q$ is less than $1$, all four points of $Q$ must lie within some interval $I$ of angular length $\pi/3$ radians. Orient $I$ counterclockwise, and let $w,x,y,z$ be the four elements of $Q$ in the order they appear along $I$. We know that $(w,x),(w,y),(w,z),(x,y),(x,z),$ and $(y,z)$ are in $A$, and all the other ordered pairs are in $B$.

We know that $\gamma_N$ maps $\{w,x,y,z\}$ onto $\{a,b,c,d\}$, but it might not preserve the ordering. Without loss of generality, we can assume $\gamma_N(w) = a$ because we can permute the labels $a,b,c,d$ cyclically. Then, we have six possibilities to check for the six permutations of the remaining three letters.  We will now check all of the possibilities. 

\begin{itemize}
\item[1)] If $(w,x,y,z) \mapsto (a,b,c,d)$, the square is type I and the corresponding zeroes in $A$ are $(w,x),(x,y),(y,z)$.
\item[2)] If $(w,x,y,z) \mapsto (a,c,b,d)$, the square is type II and the corresponding zeroes in $A$ are $(w,y),(x,z)$.
\item[3)] If $(w,x,y,z) \mapsto (a,b,d,c)$, the square is type II and the corresponding zeroes in $A$ are $(w,x),(x,z)$.
\item[4)] If $(w,x,y,z) \mapsto (a,d,c,b)$, the square is type III and the only corresponding zero in $A$ is $(w,z)$.
\item[5)] If $(w,x,y,z) \mapsto (a,d,b,c)$, the square is type II and the corresponding zeroes in $A$ are $(w,y),(y,z)$.
\item[6)] If $(w,x,y,z) \mapsto (a,c,d,b)$, the square is type II and the corresponding zeroes in $A$ are $(w,z),(x,y)$.
\end{itemize}

This confirms the proposition. The type I squares have three corresponding zeroes in $A$, the type II squares have two corresponding zeroes in $A$, and the type III squares have only one corresponding zero in $A$.

\end{proof}

We can now prove Lemma 1 and Theorem 1. 

\proof[Proof of Lemma 1.]{
Let $\gamma$ be a bad Jordan curve, let $\gamma'$ be a sufficiently $C^0$-close generic approximation, and let $f$ be a function with $\gamma'$ as its zero set as above. We now count zeroes of $g_{f,\gamma'}$ in $A$, the parity of which is the parity of $b_\gamma$. We count the zeroes by those corresponding to the squares of each type. We have an odd number of threes, an even number of twos, and an even number of ones. This totals to an odd number.
}

\proof[Proof of Theorem 1.]{
We wish to prove that every bad Jordan curve has a square envelope. We have the function $g_{f,\gamma}|_C: C\to \R^2\setminus\{(0,0)\}$, which we know to be homotopically nontrivial. Let $\pi: \R^2\setminus\{(0,0)\}\to S^1$ be radial projection onto the unit circle, and let $r = \pi\circ g_{f,\gamma}|_C$. We now take $r'$ to be a smooth approximation of $r$ with $\|r(x) - r'(x)\| < 1/4$ for all $x\in C$. Let $u\in S^1$ be a regular value for $r'$ within distance $1/4$ of the point $(-1/\sqrt 2,-1/\sqrt 2)$. Then $r^{-1}(u)$ is a 1-dimensional manifold, $L$, which is mapped under $r$ into a circle of radius $1/2$ around $(-1/\sqrt 2,-1/\sqrt 2)$, which is entirely in the lower left quadrant of the plane. Therefore, $g_{f,\gamma}|_C$ maps $L$ into the lower left quadrant of the plane. This implies that the square corresponding to a point of $L$ has its first two corners on $\gamma$, and the other two in the open interior of $\gamma$. Furthermore, since $b_\gamma$ is odd, we see that $L$ must have an odd number of connected components which are homeomorphic to $\R$ with the two ends limiting to the two boundaries of $C$. Parameterizing such a connected component, we have functions $e_1,e_2: \R\to \R^2$ with $\im(e_1) \cup \im(e_2) \subseteq \im(\gamma)$, and $\lim_{x\to \pm\infty} \|e_1(x)-e_2(x)\| = 0$, and with the property that $S_1(e_1(x),e_2(x))$ and $S_2(e_1(x),e_2(x))$ are always in the open interior of $\gamma$. Since the interior of gamma is open, we may choose a continuous function $\varepsilon: \R\to \R_{>0}$ with the property that, for all $x$, if $\|a - e_1(x)\|$ and $\|b - e_2(x)\|$ are less than $\varepsilon(x)$, then $S_1(a,b)$ and $S_2(a,b)$ are in the open interior of $\gamma$ as well.  Thus, if we choose a continuous deformation of $(e_1,e_2)$ in which we push $e_1(x)$ and $e_2(x)$ off of the Jordan curve without exceeding a distance of $\varepsilon(x)$, we obtain a square envelope for $\gamma$. 
}

\section{Relation avoiding paths}

In this section, we will prove that if the spiral conjecture is true, then the inscribed square conjecture holds for Jordan curves which are smooth except at a finite number of arbitrarily complicated singularities. 

However, we will first present a proof of Theorem 2, that the spiral conjecture is indeed true in the one-dimensional case.

\begin{proof}[Proof of Theorem 2.]
Identifying $V$ with $\C$, our relation avoiding origin paths are paths to the origin in the complex plane $p: [0,\infty) \to \C$. Furthermore, we know that $p(t)$ is never zero, because $(0,0)$ is in every linear relation. Thus, without loss of generality, we can ignore any of the $R_i$ corresponding to $\{0\}\times \C$ or $\C\times \{0\}$. We can therefore rewrite the relation $R$ as $$ xRy \iff \exists i\in \{1,...,n\} \;\;\;  x = \alpha_i y \;\; \text{or} \;\; y = \alpha_i x$$ where $\alpha_1,...,\alpha_n$ are nonzero complex numbers with norm at most one. In particular, our path $p$ is relation avoiding if and only if it is disjoint from the paths $\alpha_1 p, \alpha_2 p,..., \alpha_n p$. Without loss of generality, we can assume that $p(0) = 1$, and that $|p(t)|\leq 1$ for all $t$. The reason we can do this is that we can always homotope $p$ to such a path by first contracting it within its image so that the starting point is at the point with maximal norm, and then rescaling within $\C$ so that the starting point becomes $1$. We can also assume without loss of generality that $p$ follows the path of a logarithmic spiral inside some small neighborhood around 1, because making this happen only requires a small perturbation. Working with these assumptions, we fix a branch of the logarithm, and let $\ell_1,...,\ell_n$ be paths in upper-half plane such that $e^{2\pi i \ell_i(t)} = \alpha_ip(t)$ with $\ell_i(0) = \frac{1}{2\pi i}\ln(\alpha_i)$, and similarly let $\ell$ be a lift of $p$ with $\ell(0) = 0$. The path $\ell$ divides the upper-half plane, and we define $U$ to be the set of all points of $\mathbb{H}\setminus \im(\ell)$ from which a path to $-\infty$ has even intersection parity with $\ell$. We then define integers $k_1,...,k_n$, where $k_i$ is the largest integer such that $\ell_i(0) + k_i \in U$.

We see that $\ell$ is disjoint from $\ell_i + k$ for any index $i$ and integer $k$, and this gives us paths disjoint from $\ell$, going up to $i\infty$, from every point of the form $\ell(0) + k$. Therefore, the homotopy type of $\ell$ can be completely determined by knowing which points of the form $\ell_i(0)+k$ are in $U$.  Thus, to prove the one-dimensional spiral conjecture, it suffices to prove that there exists a $\theta\in (0,\pi)$ so that for all $i\in \{1,...,n\}$, we have the inequalities $\arg(\ell_i(0) + k_i)\geq \theta \geq \arg(\ell_i(0) + k_i + 1)$. This $\theta$ would then denote the angle of a straight line in the upper half plane that exponentiates to the desired logarithmic spiral. To prove that such a $\theta$ exists, it suffices to prove that for all $i$ and $j$ we have $\arg(\ell_j(0)+k_j+1)\leq\arg(\ell_i(0) + k_i) $.

We define a \emph{split pair} to be a pair of points in the upper half-plane $(p,q)$, such that $p + \ell$ and $q + \ell$ are both disjoint from $\ell$, with $p\in U$ and $q\not\in U$. For instance, $(\ell_i(0) + k_i,\ell_j(0) + k_j + 1)$ is always a split pair.

\begin{figure}[H]
    \item \mbox{}

    \begin{center}
        \includegraphics*[height=8cm]{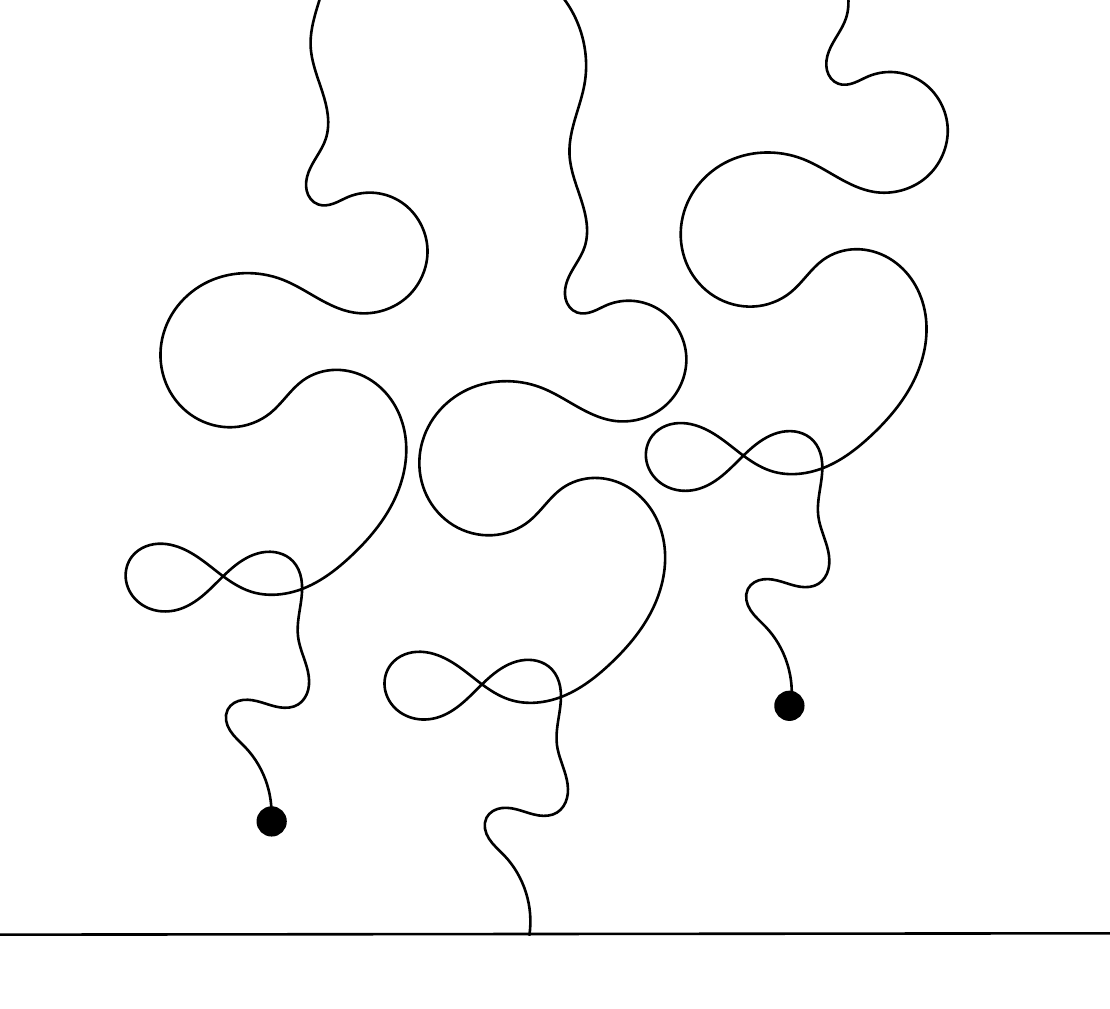}
    \end{center}
    \caption{A depiction of a split pair, disjoint translates of $\ell$, one on each side.}
\end{figure}

Given a split pair $(p,q)$, we can construct a new split pair as follows. If $\text{im}(p) \geq \text{im}(q)$, then $(p-q,q)$ is the new split pair. If $\text{im}(q) > \text{im}(p)$, then $(p,q-p)$ is the new split pair. The split pair obtained by applying this transformation is called the derived split pair. We claim that the derived split pair is always another split pair. 

\begin{figure}[H]
    \item \mbox{}

    \begin{center}
        \includegraphics*[height=5cm]{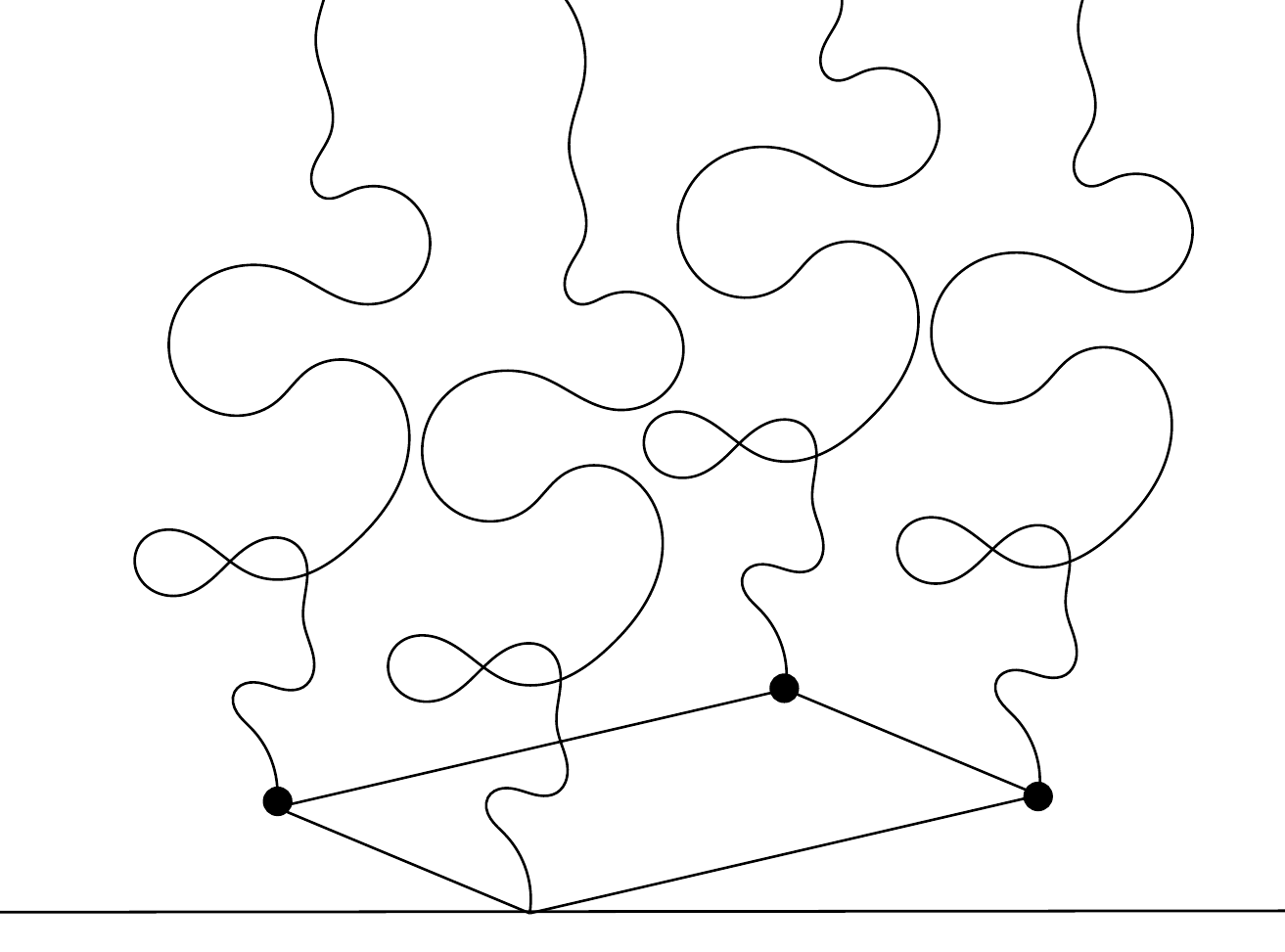}
    \end{center}
    \caption{One way to understand derived split pairs is that one draws a parallelogram spanning the origin and the two points, then moves the uppermost point to the new corner.}
\end{figure}

To prove this claim, we consider the case that $\text{im}(p) \geq \text{im}(q)$. We have that $\ell + q$ is entirely on the right side of $\ell$, and $\ell + p$ is entirely on the left side of $\ell$. Furthermore, since all of $\ell+p$ has greater imaginary coordinate than $\text{im}(q)$, we therefore have that $p\in \{a\in U: \im(a) \geq \im(q) \}\subseteq U + q$. This tells us that $p-q$ is in $U$, so $(p-q,q)$ is a valid split pair. The argument for the other case is similar. 

We say a split pair is \emph{good} if $\arg(p) > \arg(q)$. We say a split pair is \emph{bad} otherwise. We see that the derived split pair of a bad split pair is bad, and the derived split pair of a good split pair is good. To complete the proof of the theorem, it suffices to prove that all split pairs are good, so we will consider what occurs under repeated derivation of a bad split pair. First of all, note that if $\im(p)/\im(q)$ is a rational number, then we will eventually have a split pair with a real number as one of the terms, and we can see that any split pair containing a real number is good. Thus, we have shown that $\im(p)/\im(q)$ is irrational for any bad split pair. Now, we will proceed by contradiction to prove that there are no bad split pairs. Suppose that $(p,q)$ is a bad split pair. We will treat the cases of $\arg(p) < \arg(q)$ and $\arg(p) = \arg(q)$ separately. 

Suppose that $(p,q)$ is a bad split pair with $\arg(p) < \arg(q)$. Then, the $i$-th derived split pair is of the form $(a_ip - b_iq, c_iq-d_ip)$, for nonnegative integers $a_i,b_i,c_i,d_i$ such that $$\lim_{i\to \infty} a_i/b_i =  \lim_{i\to \infty} d_i/c_i = \im(q)/\im(p)$$ This tells us that while the imaginary parts of the terms of the derived split pairs approach zero, the real parts approach $+\infty$ and $-\infty$ respectively. However, this would force the second term to eventually be in $U$, which gives us a contradiction. 

Now, suppose $\arg(p) = \arg(q)$. In this case, the terms of the derived split pairs stay on a fixed straight line, and they approach zero. However, earlier in the proof, we assumed without loss of generality that $p$ followed the path of a logarithmic spiral in some small neighborhood around 1, which means $\ell$ is a straight line in some small neighborhood around zero. This means that there are no bad split pairs in this neighborhood, so we have a contradiction. This completes the proof.

\end{proof}

Before we prove Theorem 3, we need to develop some notation surrounding square envelopes, and prove a couple lemmas. 

\begin{dfn}
Let $\gamma:S^1\to \R^2$ be a Jordan curve, and let $(e_1,e_2)$ be a square envelope. We write $\sigma_{p}^*(e_1,e_2,\gamma)$, where $p\in \{1,2\}$ and $* \in \{+,-\}$, to denote a sign in $\{+1,-1\}$ determined as follows. For $\lambda\in \R$, let $P_\lambda$ be the path that starts at $e_1(\lambda)$, then goes along $e_1$ to $e_1(0)$, then goes in a straight line from $e_1(0)$ to $e_2(0)$, then follows $e_2$ to $e_2(\lambda)$, then goes in a straight line back to the starting point $e_1(\lambda)$. We then let $n(\lambda)$ be the wrapping number of $P_\lambda$ around $S_p(e_1(0),e_2(0))$. Finally, we set $\sigma_{p}^*(e_1,e_2,\gamma) = \lim_{\lambda \to *\infty}(-1)^{n(\lambda)}$. 
\end{dfn}

It is worth observing that these parities are determined by how the Jordan curve winds between the vertices of a small square near $t = \pm\infty$. For curves with finitely many singularities, they are related to each other by the following lemma.

\begin{lem}
If $\gamma$ is a Jordan curve which is smooth except at finitely many points, then the following facts are true for any square envelope $(e_1,e_2)$ of $\gamma$. 
\begin{itemize}
\item[1)] $\sigma_1^+(e_1,e_2,\gamma) = \sigma_2^+(e_1,e_2,\gamma)$ and $\sigma_1^-(e_1,e_2,\gamma) = \sigma_2^-(e_1,e_2,\gamma)$
\item[2)] $\sigma_1^+(e_1,e_2,\gamma) = -\sigma_1^-(e_1,e_2,\gamma)$
\item[3)] If $*\in \{+,-\}$ is such that $\sigma_1^*(e_1,e_2,\gamma) = -1$, then the limits  $\lim_{t\to *\infty}e_1(t)$ and $\lim_{t\to *\infty} e_2(t)$ exist, and are equal to each other.
\end{itemize}
\end{lem}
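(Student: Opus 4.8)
The plan is to analyze the local behavior of $\gamma$ near the common limit point that the square shrinks toward as $t \to \pm\infty$. The key observation is that by property 3) of a square envelope, $\|e_2(x) - e_1(x)\| \to 0$ as $x \to \pm\infty$, so the four corners $e_1(x), e_2(x), S_1(e_1(x),e_2(x)), S_2(e_1(x),e_2(x))$ all cluster near a single point $z_*$ in the closure of $\im(\gamma)$. Since $S_1, S_2$ land in the open interior of $\gamma$ while $e_1, e_2$ stay in the open exterior, the point $z_*$ must lie on $\im(\gamma)$ itself. First I would argue that $z_*$ is not one of the finitely many singular points: near a singular point one could in principle have wild behavior, but I expect the cleanest route is to observe that $z_*$ being a singular point is not actually excluded by the hypotheses — so instead I would split into cases, and in the smooth case use a local straightening. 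Let me take the smooth case as the main line: near a smooth point of $\gamma$, the curve looks like a nearly-straight arc, and a tiny square with two corners just outside and two corners just inside this arc is pinned against the arc. The relevant combinatorial data is the cyclic order in which the four corners' nearest points on $\gamma$ appear along $\gamma$, together with which side each corner is on.

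The main steps, in order: (1) Show $z_* := \lim$ exists along a subsequence and lies on $\im(\gamma)$, and reduce (for the bulk of the argument) to the case where $z_*$ is a smooth point. (2) In a small disk $D$ around $z_*$ where $\gamma \cap D$ is a smooth arc dividing $D$ into an inside half and an outside half, analyze for large $|t|$ the path $P_\lambda$ from Definition 4 and compute the winding number $n(\lambda)$ around each of $S_1(e_1(0),e_2(0))$ and $S_2(e_1(0),e_2(0))$. The parity $(-1)^{n(\lambda)}$ in the limit is governed entirely by how many times the arc $e_1$ followed by $e_2$ (the "tail" of the envelope) separates the fixed interior point from infinity — and since $S_1$ and $S_2$ are both interior points sitting on the same side of the nearly-straight local arc, the tail winds around them the same number of times mod $2$; this gives part 1). (3) For part 2), compare the $+\infty$ tail with the $-\infty$ tail: property 4) of the square envelope says the closed loop $\ell_\lambda$ formed by both tails plus two connecting segments has winding number $1$ around every interior point for $\lambda$ large. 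Decomposing this winding number $1$ into the contribution from the $+\infty$ end, the $-\infty$ end, and the two straight segments (which contribute negligibly as $\lambda \to \infty$ since the square is shrinking), and reducing mod $2$, forces the $+\infty$ parity and the $-\infty$ parity to be opposite. (4) For part 3), when $\sigma_1^*(e_1,e_2,\gamma) = -1$, the local picture forces the two corners $e_1$ and $e_2$ to approach $z_*$ from the same side in a controlled way (the arc cannot "escape" between them), so that both $e_1(t)$ and $e_2(t)$ converge — to $z_*$ — as $t \to *\infty$; the sign being $-1$ is exactly the configuration that prevents $e_1$ and $e_2$ from spiraling independently, whereas the $+1$ sign is the one compatible with a "clamshell" oscillation near the pinch point.

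The main obstacle I anticipate is step (1) together with handling the possibility that $z_*$ is one of the finitely many singular points: there the curve need not look like a straight arc, and "which side is inside" can be subtle. I would handle this by noting that the winding-number parities in Definition 4 are, as the remark before the lemma stresses, determined by the finitely many arcs of $\gamma$ near the pinch point, and that near any point — singular or not — a Jordan curve locally separates its interior from its exterior, so a sufficiently small loop based on a point just inside and just outside still detects the same topological separation data; the singular point contributes a fixed local winding that cancels when we compare $S_1$ with $S_2$ (part 1) and when we compare the two ends via property 4) (part 2). The other delicate point is making the "straight segments contribute negligibly" claim in step (3) rigorous: since $\|e_2(\lambda) - e_1(\lambda)\| \to 0$ and $\|e_2(-\lambda) - e_1(-\lambda)\| \to 0$, and the interior points $S_1(e_1(0),e_2(0)), S_2(e_1(0),e_2(0))$ are at a fixed positive distance from $z_*$, these shrinking segments eventually lie in a small neighborhood of $z_*$ disjoint from the relevant interior points, hence cannot affect the winding number once $\lambda$ is large — this is routine but worth stating carefully.
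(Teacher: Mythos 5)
Your part 2) argument (compose the two tails into the loop from property 4 of the square‑envelope definition; winding number one is odd, so the two end parities are opposite) agrees with the paper and is fine. The genuine gap is in parts 1) and 3), and it sits exactly where the lemma has content. Your plan is to pick a (subsequential) limit point $z_*$, straighten $\gamma$ in a small disk around it, and read the parities off the local picture. This fails in both of the cases that actually need an argument. First, when $\lim_{t\to *\infty}e_1(t)$ does not exist there is no single $z_*$: the limit set is a nondegenerate continuum in $\im(\gamma)$ and the tail of the envelope does not remain in any small disk, so the local computation of step (2) never gets off the ground. The paper uses the finite‑singularity hypothesis only here: a nondegenerate limit set must contain a \emph{smooth} point, and a sufficiently small square of the envelope near such a point is cut by $\gamma$ in the unique straight‑line pattern, which forces $\sigma_1^*=\sigma_2^*=+1$; part 3) is then the contrapositive of this statement. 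Second, when the limit does exist, $z_*$ may perfectly well be one of the singular points -- this cannot be excluded -- and there the curve admits no local straightening and no well‑defined ``sides of the arc''; your proposed fix (``the singular point contributes a fixed local winding that cancels'') is an assertion, not an argument, since the singularity is allowed to be arbitrarily complicated. The paper handles this case with no local model at all: if the limits exist and $\sigma_1^*\neq\sigma_2^*$, the four paths $e_1(t)$, $e_2(t)$, $S_1(e_1(t),e_2(t))$, $S_2(e_1(t),e_2(t))$ would have to approach the common limit point interleaved in cyclic order, alternating between the exterior pair and the interior pair, contradicting the fact that the Jordan curve separates $\{e_1,e_2\}$ from $\{S_1,S_2\}$. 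Nothing in your proposal supplies a replacement for this step.

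A secondary but real problem: the winding numbers in the definition of $\sigma_p^*$ are taken around $S_1(e_1(0),e_2(0))$ and $S_2(e_1(0),e_2(0))$, the interior corners of the square at time $0$, which are fixed points generally nowhere near $z_*$; your phrase that ``$S_1$ and $S_2$ are both interior points sitting on the same side of the nearly‑straight local arc'' conflates them with the shrinking corners at large $|t|$. The statement you actually need (and the one the paper makes) is that for large $\lambda$ the moving part of $P_\lambda$ consists of small squares close to $\gamma$, hence cannot wrap around these fixed interior corners, so the parity stabilizes at $+1$ in the smooth‑limit‑point case. Relatedly, your part 3) is asserted (``the $-1$ sign prevents independent spiraling'') rather than derived; the workable route is the contrapositive described above, with the equality of the two limits then coming for free from $\|e_1(t)-e_2(t)\|\to 0$.
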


\begin{proof}
First of all, by property (4) in the definition of a square envelope, we immediately have that $\sigma_1^+(e_1,e_2,\gamma) = -\sigma_1^-(e_1,e_2,\gamma)$ and $\sigma_2^+(e_1,e_2,\gamma) = -\sigma_2^-(e_1,e_2,\gamma)$ because the defining loops for the parities in question compose to give us a loop that wraps around the Jordan curve exactly once.

Next, we claim that if $*\in \{+,-\}$, and the limit $\lim_{t\to *\infty} e_1(t)$ does not exist, then we have $\sigma_1^*(e_1,e_2,\gamma) = \sigma_2^*(e_1,e_2,\gamma) = 1$. To prove this claim, note that for the limit to fail to exist, there must be a limit point of $e_1(t)$, $t\to *\infty$ at one of the points where $\gamma$ is smooth. Taking a sufficiently small square of the envelope, in the $*\infty$ direction, near such a point, we see that the Jordan curve must separate the vertices of the square in a way locally equivalent to how a straight line could separate the vertices, and there is only one such way that separates the $e_1,e_2$ vertices from the other two vertices. Since all squares sufficiently near $t = \pm \infty$ will have side length smaller than the diameter of some disk inside of the Jordan curve, the paths that define $\sigma_1^*(e_1,e_2,\gamma)$ and  $\sigma_2^*(e_1,e_2,\gamma)$ cannot wrap around either of the two interior vertices of the square at $t= 0$. Therefore, we have $\sigma_1^*(e_1,e_2,\gamma) = \sigma_2^*(e_1,e_2,\gamma) = 1$. 

To prove all three parts of the lemma, all that remains is to eliminate the possibility that the limits $\lim_{t\to *\infty}e_1(t)$ and $\lim_{t\to *\infty} e_2(t)$ exist, but $\sigma_1^*(e_1,e_2,\gamma)$ and $\sigma_2^*(e_1,e_2,\gamma)$ are not equal to one another. The reason this is impossible is that for this to be the case, the paths $e_1(t), e_2(t), S_1(e_1(t),e_2(t)),$ and $S_2(e_1(t),e_2(t))$ would need to all approach some point in the plane, none of them intersecting each other, in such a way that as one encircles the point counterclockwise, the paths appear in a cyclic order that alternates between the sets $\{e_1(t), e_2(t)\}$ and $\{S_1(e_1(t),e_2(t)),S_2(e_1(t),e_2(t))\}$. This would then contradict the fact that the simple closed curve $\gamma$ separates the paths in $\{e_1(t), e_2(t)\}$ from those in $\{S_1(e_1(t),e_2(t)),S_2(e_1(t),e_2(t))\}$.
\end{proof}

If $\gamma$ is a Jordan curve which is smooth except at finitely many points, and  $(e_1,e_2)$ is a square envelope, we say  $(e_1,e_2)$ is positively oriented if $\sigma_1^+(e_1,e_2,\gamma) = -1$ and negatively oriented if $\sigma_1^+(e_1,e_2,\gamma) = +1$. Note that if $(e_1(t),e_2(t))$ is a negatively oriented square envelope, we can obtain a positively oriented one by taking $(e_1(-t), e_2(-t))$.

We need one more lemma before we can prove Theorem 3. 

\begin{lem}
Let $\alpha_1,\alpha_2, \beta_1, \beta_2$ be complex numbers such that the ratio $\beta_1/\beta_2$ is a positive real number. Let $t_0$ and $p$ be arbitrary real numbers with $p \neq 0$, and let $\lambda,r_1,r_2$ be real numbers in $[0,1)$. If there exist real numbers $t_1$ and $t_2$ in $[0,\infty)$ such that  $$e^{\alpha_1 + \beta_1p(t_1-t_0)}\left(1 + \lambda r_1e^{ip(t_1-t_0)}\right)=e^{\alpha_2 + \beta_2p(t_2-t_0)}\left(1 + \lambda r_2e^{ip(t_2-t_0)}\right)$$ then there also exist  $t_1'$ and $t_2'$ in $[0,\infty)$ such that$$e^{\alpha_1 + \beta_1p(t_1'-t_0)}\left(1 + r_1e^{ip(t_1'-t_0)}\right)=e^{\alpha_2 + \beta_2p(t_2'-t_0)}\left(1 + r_2e^{ip(t_2'-t_0)}\right)$$
\end{lem}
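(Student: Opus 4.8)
The plan is to prove the lemma by a continuity argument in the scaling parameter $\lambda$, after first reducing it to its essential content.

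\emph{Normalization and an a priori estimate.} Setting $s_j = p(t_j - t_0)$ replaces the constraint $t_j\in[0,\infty)$ by ``$s_j\in J$'' for a half-line $J$; replacing $(s_j,\beta_j)$ by $(-s_j,-\beta_j)$ if necessary — which preserves positivity of $\beta_1/\beta_2$ — we may assume $J=[c,\infty)$, and dividing the equation through by $e^{\alpha_2+\beta_2 s_2}$ we may assume $\alpha_2=0$. Since $\beta_1/\beta_2$ is a \emph{positive} real number, both $\beta_j$ are nonzero; write $\beta_1=\rho\beta_2$ with $\rho>0$. For $\mu\in[0,1]$ put $E_\mu(s_1,s_2)=e^{\alpha_1+\beta_1 s_1}(1+\mu r_1 e^{is_1})-e^{\beta_2 s_2}(1+\mu r_2 e^{is_2})$, a continuous map $J\times J\to\C$; we are handed a zero of $E_\lambda$ and must produce one for $E_1$. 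Since $|\mu r_j e^{is_j}|=\mu r_j<1$, dividing $E_\mu(s_1,s_2)=0$ by $e^{\beta_2 s_2}$ shows it is equivalent, with $\sigma:=\rho s_1-s_2$, to
$$e^{\alpha_1+\beta_2\sigma}\bigl(1+\mu r_1 e^{is_1}\bigr)=1+\mu r_2 e^{is_2}, \qquad (\ast)$$
and comparing moduli in $(\ast)$, using $1-r_j\le|1+\mu r_j e^{is_j}|\le 1+r_j$, gives $|\mathrm{Re}(\beta_2)|\cdot|\sigma|\le C$ for a constant $C=C(\alpha_1,r_1,r_2)$ \emph{independent of $\mu$}. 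Thus, in the main case $\mathrm{Re}\,\beta_2\neq 0$, all zeros of all the maps $E_\mu$ lie in one fixed ``slope strip'' $\{|\rho s_1-s_2|\le C'\}$ (the case $\mathrm{Re}\,\beta_2=0$, where $(\ast)$ instead fixes a ratio of bounded moduli, is handled by the same bookkeeping); and since $s\mapsto e^{is}$ carries $J$ onto $S^1$, a sequence of zeros can fail to subconverge only by having $s_1$, hence $s_2\approx\rho s_1$, run off to $+\infty$.

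\emph{The continuity argument.} After a small generic perturbation of $\alpha_1$ (degenerate values being recovered afterwards by a limiting argument), $0$ is a regular value of every $E_\mu$, so $\mathcal{M}=\{(s_1,s_2,\mu):E_\mu(s_1,s_2)=0\}$ is a $1$-manifold with boundary. Follow the component through the given zero at $\mu=\lambda$. It cannot return to $\mu=0$, where $(\ast)$ would force $\alpha_1+\beta_2\sigma\in 2\pi i\Z$ with real $\sigma$ — impossible for generic $\alpha_1$. It cannot escape to $s_1=+\infty$: when $\rho=P/Q$ is rational, the translation $(s_1,s_2)\mapsto(s_1-2\pi Q,\,s_2-2\pi P)$ carries solutions of $(\ast)$ to solutions, so the slope strip confines every zero, up to this translation, to a fixed compact region; the irrational case then follows by approximating $\beta_1$ by $\rho\beta_2$ with $\rho$ rational and invoking closedness. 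Hence, apart from reaching $\mu=1$ — which is our goal — the only possibility is that the component runs into the initial-time boundary $\{s_1=c\}\cup\{s_2=c\}$ at some $\mu^\ast<1$.

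Ruling this last escape out is the crux, and the step I expect to be hardest: a zero can perfectly well slide out of the domain through $t=0$, so following a single component does not suffice. The remedy I would pursue is a \emph{global} count: having confined every zero to a compact region, show that the mod-$2$ number of zeros of $E_\mu$ on (a slightly enlarged, boundary-avoiding thickening of) that region is a $\mu$-invariant which is nonzero at $\mu=\lambda$, so that the zeros cannot all drain out through $t=0$ before $\mu$ reaches $1$. Arranging the count to be insensitive to the $t=0$ face, and verifying that it does not vanish, is where the genuine work lies.
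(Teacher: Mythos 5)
Your proposal does not close, and the step at which you stop is not a technicality but the entire content of the lemma. The mod-$2$ count you hope for has no evident invariance: the parity of the number of zeros of $E_\mu$ in a region is only preserved under the homotopy in $\mu$ if no zeros cross the boundary of that region, and the face $\{t_1=0\}\cup\{t_2=0\}$ is precisely where they can cross — that is the escape you are trying to forbid, so the argument is circular as it stands. (If the domain were all of $\R^2$ rather than a product of half-lines, the statement would be much softer; the half-line constraint is what makes the lemma nontrivial, and your scheme has no mechanism for it.) Two auxiliary steps are also shakier than you indicate: the compactness-up-to-translation claim controls the zero \emph{set}, not the component you are following, and the passage from rational $\beta_1/\beta_2$ to irrational ``by approximating and invoking closedness'' does not preserve the periodicity you used; likewise the case $\mathrm{Re}\,\beta_2=0$, where your strip estimate gives nothing, is waved at rather than handled.

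The paper avoids continuation and degree theory altogether by a monotonicity-in-$\lambda$ observation. Substituting $t=p(t_1-t_0)$ and $p(t_2-t_0)=(\beta_1/\beta_2)t+s$, the equation becomes $c(s)=\lambda\bigl(a\,e^{it}+b(s)\,e^{i(\beta_1/\beta_2)t}\bigr)$ with $a=r_1$, $b(s)=-r_2e^{\alpha+(i-\beta_2)s}$, $c(s)=e^{\alpha+\beta_2 s}-1$, and for fixed $s$ the admissible $t$ (coming from $t_1,t_2\ge 0$) form a half-line $T(s)$. Solvability is then recast as: there exists $s$ with $c(s)\in\lambda H(s)$, where $H(s)$ is the minimal simply connected set containing the curve $\{a e^{it}+b(s)e^{i(\beta_1/\beta_2)t}:t\in T(s)\}$. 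Because $\beta_1/\beta_2>0$, $H(s)$ is the region bounded by the outer boundary of an epitrochoid (rational ratio) or a disk missing part of its boundary (irrational ratio); in either case it is radial, i.e.\ star-shaped about the origin, so $\lambda H(s)\subseteq H(s)$ for $\lambda\in[0,1]$. Hence a solution at parameter $\lambda$ is automatically a solution at parameter $1$ — increasing $\lambda$ can only enlarge the solution set, which is exactly the invariant your approach was missing.
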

\begin{proof}

Making the substitutions $\alpha_1-\alpha_2 = \alpha$, $p(t_1-t_0)= t$, and $p(t_2-t_0) = (\beta_1/\beta_2)t + s$, the equation $$e^{\alpha_1 + \beta_1p(t_1-t_0)}\left(1 + \lambda r_1e^{ip(t_1-t_0)}\right)=e^{\alpha_2 + \beta_2p(t_2-t_0)}\left(1 + \lambda r_2e^{ip(t_2-t_0)}\right)$$ rearranges to $$e^{\alpha - \beta_2s}= \frac{1 + \lambda r_1e^{it}}{1 + \lambda r_2 e^{i(\frac{\beta_1}{\beta_2}t + s)}}$$
Rearranging, and setting $a = r_1$, $b(s) = -r_2e^{\alpha + (i-\beta_2)s}$, and $c(s) = e^{\alpha + \beta_2s} - 1$, we have $$ c(s) = \lambda\cdot \left( a\cdot e^{it} + b(s)\cdot  e^{i\left(\frac{\beta_1}{\beta_2}t\right)}\right) $$

Also, let $$T(s) = \{t: \exists t_1\geq 0, \exists t_2\geq 0, \;\;\; p(t_2-t_0)=(\beta_1/\beta_2)t+s, \;\;\;p(t_1-t_0) = t\}$$ be the allowable values of $t$ for fixed $s$, when $t_1$ and $t_2$ are nonnegative. This will always be an interval of the form $(-\infty,z]$ or $[z,\infty)$ for some real number $z$. Now, let $H(s)$ denote the unique minimal simply connected set containing the set $$\{a\cdot e^{it} + b(s)\cdot  e^{i(\beta_1/\beta_2)t}: t \in  T(s)\}$$ Then, we see that there exists a pair of nonnegative real numbers $t_1$ and $t_2$ such that   $$e^{\alpha_1 + \beta_1p(t_1-t_0)}\left(1 + \lambda r_1e^{ip(t_1-t_0)}\right)=e^{\alpha_2 + \beta_2p(t_2-t_0)}\left(1 + \lambda r_2e^{ip(t_2-t_0)}\right)$$ if and only if there exists a real number $s$ such that $c(s) \in \lambda H(s)$. Since $\beta_1/\beta_2$ is positive, when $\beta_1/\beta_2$ is rational, $H(s)$ is the region enclosed by the outer boundary of an epitrochoid curve.  When $\beta_1/\beta_2$ is irrational, $H(s)$ it is a disk missing some subset of its boundary. In particular, $H(s)$ is always a radial set, meaning that for any $x\in H(s)$, we have $\lambda x\in H(s)$ for all $\lambda\in [0,1]$. Therefore, increasing the parameter $\lambda$ preserves the existence of solutions to our original equation. Setting $\lambda = 1$ gives us the desired result.

\begin{figure}[H]
\centering
\includegraphics[scale = 0.3]{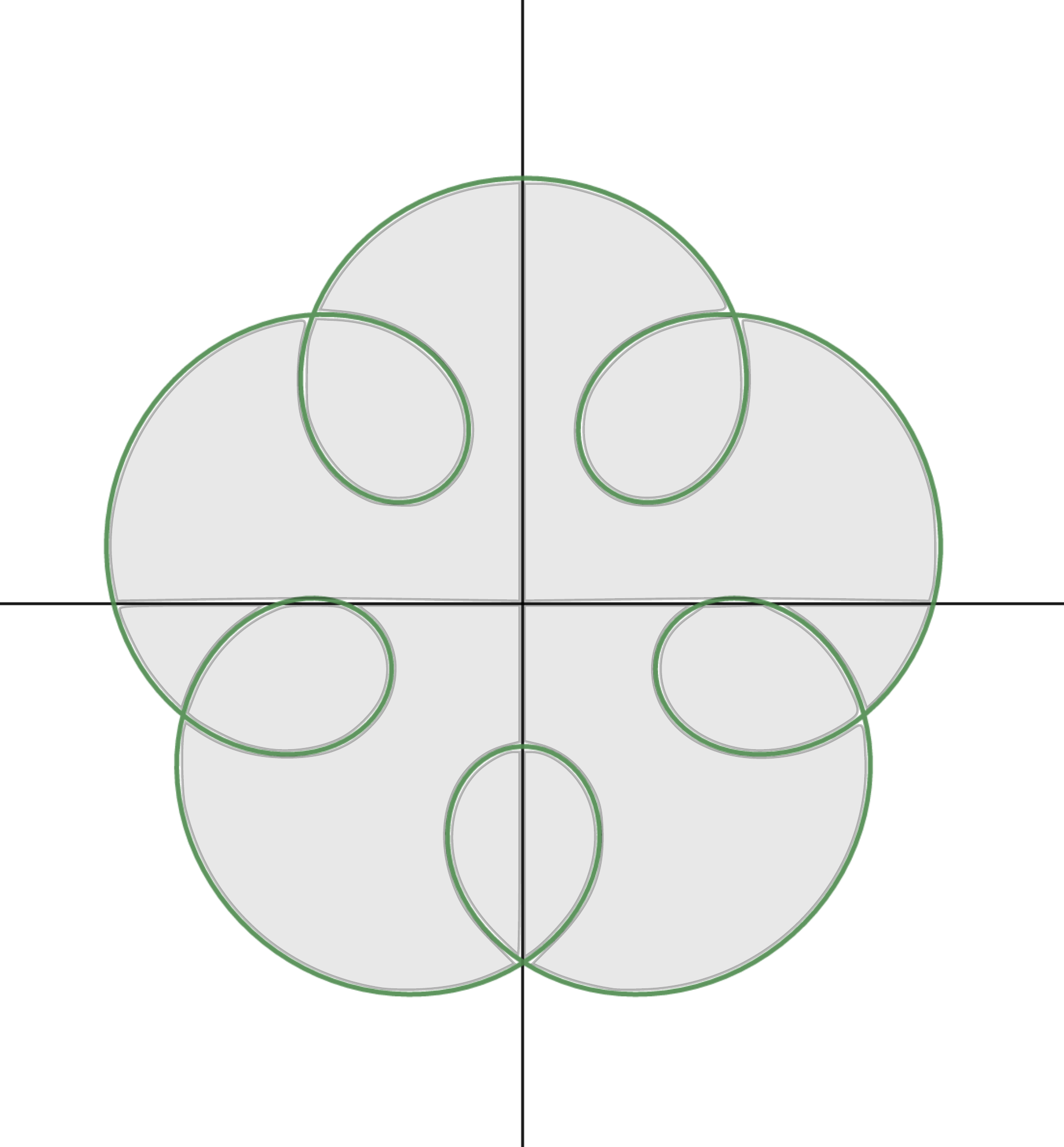}
\caption{The region enclosed by an epitrochoid curve is always a radial set.}
\end{figure}

\end{proof}

Now, we can prove Theorem 3.
\begin{proof}[Proof of Theorem 3.]
We begin by considering the system of linear relations on $\C^2$ given by letting $(x_1,y_1)R(x_2,y_2)$ when $$ \{x_1,y_1,x_2,y_2\}\cap \{x_1 + i(y_1-x_1),y_1 + i(y_1-x_1),x_2 + i(y_2-x_2),y_2 + i(y_2-x_2) \} \neq \varnothing$$ 
Now, take a positively oriented square envelope $(e_1,e_2)$. Due to Lemma 2, restricting the domain to $[0,\infty)$ immediately gives us a relation avoiding origin path $(e_1(t),e_2(t)) - \lim_{t\to \infty}(e_1(t),e_2(t))$ for $R$. We are assuming the spiral conjecture, so we may mow take $h:[0,1]\times[0,\infty)$ satisfying the conditions of the spiral conjecture. This gives us complex numbers $a_1,a_2$, and vectors $x_1,x_2\in \C^2$, such that $(f_1(t),f_2(t))= f(t) = x_1e^{a_1t} +x_2e^{a_2t}$ is a relation avoiding origin path for $R$ which has the property that the path in $\C$ starting at $0$, following $f_2$ to $f_2(0)$, going in a straight line from $f_2(0)$ to $f_1(0)$, then following $f_1$ back to $0$ has nontrivial wrapping number parity around the points $f_1(0) + i(f_2(0) - f_1(0))$  and $f_2(0) + i(f_2(0) - f_1(0))$. Without loss of generality, we can assume that $a_1$ and $a_2$ have the same real part, because otherwise one term will dominate at large $t$, reducing us to the case of a pure logarithmic spiral, which we will cover later. Similarly, we may assume $a_1\neq a_2$. Now, to prove that no such $a_1,a_2,x_1,x_2$ exist, we see that for the path to be relation avoiding, we must be able to find some fixed complex number $\beta$, and nonzero real number $p$, such that all four paths $f_1(t),f_2(t), f_1(t) + i(f_2(t) - f_1(t))$, and $f_2(t) + i(f_2(t) - f_1(t))$ are of the form $$ e^{\alpha + q \beta  p(t-t_0)}\left(1 + r e^{ip(t-t_0)}\right) $$ with $\alpha\in \C, t_0\in \R, q\in (0,\infty),$ and $r\in [0,1)$. Therefore, Lemma 3 allows us to continuously scale down all of the $r$ parameters to zero while staying a relation avoiding origin path. 

All that remains is to show that no pure logarithmic spiral $f(t) = xe^{at}$ can be a relation avoiding origin path with the stated wrapping number parity condition. To prove this, note that the region swept out by a line segment between $f_1$ and $f_2$ would have the same area as the region swept out by the line segment between $f_1 + i(f_2-f_1)$ and $f_2+i(f_2-f_1)$. However, our wrapping number condition implies that the former region would need to strictly contain the latter, which would contradict them having equal areas. 
\end{proof}

To wrap things up, we make a couple more conjectures about relation avoiding origin paths. 

\begin{cnj}[SSC, Strong Spiral Conjecture]
Every relation avoiding origin path $p$ is strongly homotopic to a relation avoiding origin path $q$ for which there exist linearly independent vectors $x_1,...,x_n$ in $V$ and complex numbers $a_1,...,a_n$ such that $$q(t) = \sum_{i = 1}^n x_i\cdot e^{a_it}$$ for all $t$.
\end{cnj}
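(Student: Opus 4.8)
The plan is to deduce the strong spiral conjecture from the (weak) spiral conjecture by promoting the weak homotopy it produces to a strong one. First I would apply the spiral conjecture to the given relation-avoiding origin path $p$, obtaining a weak homotopy $h:[0,1]\times[0,\infty)\to V$ from $p$ to a spiral sum $q(t)=\sum_i x_i e^{a_i t}$. Consider the \emph{bad locus} $B=\{(s,t_1,t_2):h(s,t_1)\,R\,h(s,t_2)\}\subseteq[0,1]\times[0,\infty)^2$. By property (4) of a weak homotopy, $B$ is disjoint from the faces $t_1=0$ and $t_2=0$; since $p$ and $q$ are genuine relation-avoiding origin paths, $B$ is also disjoint from the faces $s=0$ and $s=1$. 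The goal is to deform $h$, keeping these boundary data fixed, until $B$ is empty; then every slice $h(s,-)$ is relation-avoiding and, together with the conditions already guaranteed by weak homotopy, $h$ witnesses a strong homotopy.

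Next I would put $h$ in general position. Each $R_i$ is a complex-linear subspace of $V\times V$; after a small perturbation of $h$ the relations of complex codimension at least two contribute nothing to $B$ (their preimages have negative expected dimension), while the codimension-one relations make $B$ a $1$-manifold, symmetric under $t_1\leftrightarrow t_2$, meeting the interior of the cube transversely and disjoint from all six faces except possibly the faces and edges ``at infinity'' in the $t_1,t_2$ directions. Since the $s=0$ and $s=1$ ends are empty, the components of $B$ are either circles sitting inside the cube, or arcs running off to the corner $t_1=t_2=\infty$. A circle component attains an interior maximum of the $s$-coordinate, and I would cancel such components by a Whitney-trick / finger-move argument inside the $(s,t_1,t_2)$-cube: push the corresponding double points of $h$ apart, which is possible because nothing can be trapped against the $s=0$ face where $p$ has no self-relations. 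This produces a strong homotopy on any region bounded away from $t=\infty$.

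The main obstacle is the behavior as $t\to\infty$, and this is where an induction on $\dim_{\C}V$ enters. Every path here limits to $0$, which lies in every $R_i$, so arcs of $B$ legitimately escape to the corner at infinity, and a careless perturbation would destroy the essential condition $h(s,t)\to 0$. To handle this I would analyze the terminal asymptotics of $h(s,-)$: near $t=\infty$ it is governed by the eigendirection(s) of slowest decay, and modulo those directions one gets a relation-avoiding origin path in a proper quotient of $V$, where the inductive hypothesis applies, with the one-dimensional case supplied by Theorem 2 (strengthened so that its homotopy can be taken strong, which the split-pair argument should allow since the combinatorial data there is realized by a straight line and the interpolation can be kept in the complement of the forbidden translates). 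A ``radial set'' estimate in the spirit of Lemma 3 should then let me scale the subdominant terms down to true spiral form without creating new relations far out.

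The genuinely hard part, I expect, is not either piece in isolation but the \textbf{gluing}: matching the far-out inductive homotopy to the compact-part Whitney-trick homotopy so that they agree on an overlap and assemble into a single strong homotopy, while simultaneously maintaining $h(s,t)\to 0$ uniformly and avoiding the reintroduction of bad pairs near the corner at infinity. Controlling this interface — in effect, a quantitative version of the statement that the asymptotic regime and the bounded regime interact only through codimension $\geq 1$ data — is where I would expect the real work, and possibly the need for additional hypotheses on the $R_i$, to lie.
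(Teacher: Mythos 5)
The statement you are addressing is not a theorem of the paper but Conjecture 3: the paper offers no proof of SSC, only the remark that the implications run $MSC \implies SSC \implies SC$. Your proposal attempts the reverse implication, deducing SSC from SC, so even if every step worked you would have proved a conditional statement (equivalence of the weak and strong forms) that the paper does not claim and that is presumably a substantial open problem in its own right; nothing unconditional about SSC would follow, since SC is itself open.

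Beyond that framing issue, the key technical step fails as described. The bad locus is the preimage of $R$ under the evaluation map $(s,t_1,t_2)\mapsto (h(s,t_1),h(s,t_2))$, and this map cannot be put in general position by perturbing it freely: it factors through $h$, so its values at $(s,t_1,t_2)$ and $(s,t_1,t_2')$ move in a correlated way, and the standard transversality and Whitney-trick/finger-move toolkit (which in any case needs an ambient setting with embedded Whitney disks, not available here) does not apply to such constrained maps. Worse, every slice satisfies $h(s,t)\to 0$ and $0$ lies in every $R_i$, so near the corner at infinity the map is pinned to the most singular stratum of $R$ and the ``negative expected dimension'' dismissal of higher-codimension relations breaks down exactly where the difficulty lives; note that property (4) of a weak homotopy only forbids relations with the basepoint $h(s,0)$, so the entire content of ``strong'' is the part your perturbation argument is supposed to supply. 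The inductive step is also unsupported: passing to a quotient of $V$ by the slowest-decay directions does not carry $R$ to a union of linear relations for which relation-avoidance is preserved in either direction (a pair avoiding $R$ can become related in the quotient, and lifting the inductive homotopy back to $V$ can create new relations), and the claimed strengthening of Theorem 2 to strong homotopies is asserted rather than proved. You correctly identify the gluing at infinity as the hard part, but that, together with the illegitimate general-position step, is precisely where the proposal stops being a proof.
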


\begin{cnj}[MSC, Monotonic Spiral Conjecture]
For any continuous function $p:[0,\infty) \to V\setminus\{0\}$ with $\lim_{t\to \infty}p(t) = 0$, there exists a continuous function $h: [0,1]\times [0,\infty) \to V$ with the following properties. 
\begin{itemize}
\item[1)]  $\lim_{t\to \infty}h(s,t) = 0$ for all $s$.
\item[2)]  $h(0,t) = p(t)$ for all $t$.
\item[3)] There exist linearly independent vectors $x_1,...,x_n$ in $V$ and complex numbers $a_1,...,a_n$ such that $$h(1,t) = \sum_{i = 1}^n x_i\cdot e^{a_it}$$ for all $t$.
\item[4)]  If $0\leq s_1\leq s_2 \leq 1$, then $B(s_2) \subseteq B(s_1)$, where $$B(s) :=  \bigcup_{(t_1,t_2) \in [0,\infty)^2}\{f\in (V\times V)': f(h(s ,t_1),h(s ,t_2)) = 0\} $$
\end{itemize} 
\end{cnj}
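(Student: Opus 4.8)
\medskip
\noindent\textbf{A proposed approach to the monotonic spiral conjecture.} Condition (4) is the whole point: if one drops it, the existence of \emph{some} homotopy $h$ obeying (1)--(3) is just the spiral conjecture applied to the empty relation, where ``weakly homotopic'' collapses to ``homotopic through origin paths.'' So the plan is to construct $h$ as a finite composition of elementary moves, each of which is visibly non-increasing on the obstruction set $B(s)$. Three kinds of move are safe. \emph{Reparametrization}: replacing $h(s,\cdot)$ by $h(s,\varphi_s(\cdot))$ for an increasing self-homeomorphism $\varphi_s$ of $[0,\infty)$ leaves $B$ unchanged, because the set of pairs $(h(s,t_1),h(s,t_2))$ is unchanged. \emph{Contraction within the image}: precomposing with a continuous $\psi_s\colon[0,\infty)\to[0,\infty)$ whose image is a subinterval $[a(s),\infty)$ shrinks the pair set, hence shrinks $B$, while preserving $h(s,t)\to 0$. \emph{Amplitude deflation}: if, after projection by the relevant functionals, the path has the form $t\mapsto e^{\alpha+q\beta p(t-t_0)}\bigl(1+re^{ip(t-t_0)}\bigr)$ on some subinterval, then scaling $r$ down to $0$ is non-increasing on $B$; this is exactly Lemma 3 read in the direction of decreasing the modulation parameter, the mechanism being that the epitrochoidal regions there are radial.

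First I would put $p$ into a normal form. Using the small perturbation that forces a path to follow a logarithmic spiral near a prescribed parameter (as in the proof of Theorem 2), arrange that $p$ coincides with a single generalized logarithmic spiral on an initial interval $[0,t_0]$, and, by the analogous perturbation near infinity, that $p$ is asymptotic there to a generalized logarithmic spiral. On the compact part in between, approximate $p$ uniformly by a \emph{broken spiral}: a finite concatenation $\sigma_1*\sigma_2*\cdots*\sigma_m$ of generalized-logarithmic-spiral arcs with matching endpoints. The subtle point already here is that a generic broken-spiral approximation can momentarily enlarge $B$, so the approximation must either be routed through a configuration whose obstruction set already contains those of all the intermediate paths, or be interleaved with deflations so that $B$ never grows. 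I expect this bookkeeping to be doable but not free.

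Next I would straighten the joints of the broken spiral one at a time. The key sublemma is a concatenation version of Lemma 3: two consecutive generalized-logarithmic-spiral arcs $\sigma_j*\sigma_{j+1}$ can be replaced by a single generalized-logarithmic-spiral arc whose set of relation-hitting time pairs is contained in that of $\sigma_j*\sigma_{j+1}$, through a homotopy that is non-increasing on $B$. After $m-1$ such straightenings the path is a single generalized logarithmic spiral $q(t)=\sum_i x_ie^{a_it}$, possibly with dependent $x_i$ or with redundant exponents; a final cleanup --- identifying terms with equal $a_i$, then shrinking to zero the coefficient of any $x_i$ lying in the span of the others (again a deflation-type move) --- yields the linearly independent form demanded by (3) without enlarging $B$. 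The composite homotopy is then non-increasing on $B$, which is condition (4).

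The hard part will be the concatenation sublemma, equivalently the radiality of the regions swept out by exponential sums $t\mapsto\sum_i c_ie^{i\theta_it}$ with the $\theta_i$ real --- the higher-order analogue of the epitrochoid radiality used in Lemma 3. This is where I expect the genuine obstruction, and it is also where the monotonic conjecture is closest to the spiral conjecture itself: a proof of SC that proceeds by an explicit monotone construction would very likely already contain this sublemma. A secondary difficulty is that $B(s)$ lives in the entire dual space $(V\times V)'$, so ``non-increasing on $B$'' has to hold for all functionals at once; the saving grace is that once the path is a generalized logarithmic spiral, $\phi(q(t))$ is a modulated trigonometric polynomial for every $\phi\in V'$, so one radiality statement about such curves settles every functional simultaneously.
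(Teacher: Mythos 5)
You should note first that the statement you are proving is not a theorem of the paper at all: MSC is one of the concluding conjectures, and the paper offers no proof of it, only a heuristic remark that in dimension one a curve shortening flow in a logarithmic metric ought to produce the required monotone homotopy. So there is no proof of record to match your argument against, and your text, which is explicitly a programme rather than a proof, should not be presented as settling the conjecture. Comparing routes: the paper's suggested mechanism is a continuous flow that is monotone by a variational/geometric argument, whereas yours is a discrete scheme (normal form, broken-spiral approximation, straightening of joints one at a time); either could in principle work, but neither is carried out.

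The concrete gaps in your proposal are the two steps you yourself flag, and they are not bookkeeping --- they are essentially the whole content of the conjecture. First, the broken-spiral approximation on the compact middle part can enlarge $B(s)$, and you give no mechanism for routing the approximation so that it does not; ``interleave with deflations'' is not an argument, since deflation is only available once the path already has the modulated-exponential form. Second, and more seriously, your ``safe'' amplitude-deflation move is not actually justified by Lemma 3 in the generality you need. Lemma 3 concerns one equation between two modulated exponentials with a common rotation rate, i.e.\ the finitely many corner-coincidence functionals coming from the square relation on $\C^2$; condition (4) of MSC quantifies over \emph{every} functional $f\in (V\times V)'$, and for a general $f$ the function $t\mapsto f(h(s,t_1),h(s,t_2))$ involves sums of several exponential terms with different frequencies. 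The radiality statement for such higher-order exponential sums --- your ``concatenation sublemma'' --- is exactly the unproven core, so even the elementary moves you list as visibly non-increasing on $B$ are not established. Until that sublemma is proved, the proposal reduces MSC to a statement at least as hard as MSC itself, which is a reasonable research direction but not a proof.
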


It is not too difficult to prove the implications $MSC \implies SSC \implies SC$. The strong spiral conjecture clearly implies the spiral conjecture, but it might not be so easy to see why the monotonic spiral conjecture implies the strong spiral conjecture. The reason is that a small perturbation of the homotopy can be made to avoid any relations with codimension greater than one in $V\times V$, and the homotopy for the monotonic spiral conjecture will always increase the set of codimension one relations that it avoids. 

Morally speaking, the monotonic spiral conjecture should be true in the one-dimensional case because curve shortening flow in a logarithmic metric should have the desired property. This argument should at least cover the case of smooth paths which are periodic deviations from a logarithmic spiral, but the analysis is more complicated for arbitrary continuous paths. It would be useful if there was a way to generalize such an argument to higher dimensions, though it is not at all obvious how to achieve this. Regardless, we hope that a deep enough understanding of relation avoiding paths will lead to new progress on the inscribed square problem.
\newpage

\bibliography{Refrences}{}

\begin{thebibliography}{10}

\bibitem{convex}
Arseniy Akopyan and Sergey Avvakumov.
\newblock Any cyclic quadrilateral can be inscribed in any closed convex smooth
  curve.
\newblock {\em Forum of Mathematics, Sigma}, 2018.

\bibitem{pegs}
H.B. Griffiths.
\newblock The topology of square pegs in round holes.
\newblock {\em Proceedings of the London Mathematical Society},
  s3-62(3):647--672, 1991.

\bibitem{cantarella}
John~McCleary Jason~Cantarella, Elizabeth~Denne.
\newblock Transversality for configuration spaces and the ``square-peg''
  theorem.
\newblock {\em arXiv:1402.6174 [math.GT]}, 2014.

\bibitem{thesis}
Benjamin Matschke.
\newblock {\em Equivariant topology methods in discrete geometry.}
\newblock PhD thesis, Freie University at Berlin, 2011.

\bibitem{survey}
Benjamin Matschke.
\newblock A survey on the square peg problem.
\newblock {\em Notices Amer. Math. Soc.}, 61(4):346--352, 2014.

\bibitem{matschke}
Benjamin Matschke.
\newblock Quadrilaterals inscribed in convex curves.
\newblock {\em arXiv:1801.01945 [math.MG]}, 2018.

\bibitem{symmetric}
M.~J. Nielsen and S.~E. Wright.
\newblock Rectangles inscribed in symmetric continua.
\newblock {\em Geometriae Dedicata}, 56(3):285--297, 1995.

\bibitem{Sch}
L.~G. Schnirelman.
\newblock On some geometric properties of closed curves.
\newblock {\em Usp. Mat. Nauk}, 10:34--44, 1944.

\bibitem{Rich}
R.E. Schwartz.
\newblock A trichotomy for rectangles inscribed in jordan loops.
\newblock {\em arXiv:1804.00740 [math.MG]}, 2018.

\bibitem{tao}
Terence Tao.
\newblock An integration approach to the toeplitz square peg problem.
\newblock {\em Forum of Mathematics, Sigma}, (5), 2017.

\end{thebibliography}
\nocite{*}
\bibliographystyle{plain}

\end{document}